\documentclass{article}
\usepackage{graphicx}
\usepackage[left=3cm, top=2cm, right=3cm, bottom=3cm]{geometry}
\usepackage{listings}
\usepackage{float}
\usepackage{amssymb,amsmath,mathtools}
\usepackage{enumerate}
\usepackage[numbers,square,sort]{natbib}
\usepackage{wrapfig}
\usepackage{framed}
\usepackage{tabstackengine}
\usepackage{braket}
\usepackage[nottoc]{tocbibind}\settocbibname{References}
\usepackage{mathrsfs}
\usepackage{verbatim}

\usepackage{indentfirst}
\usepackage{setspace}
\usepackage{bm}

\usepackage{hyperref}
\hypersetup{colorlinks,citecolor=black,linkcolor=black}

\usepackage[dvipsnames]{xcolor}

\usepackage{tikz}
\tikzset{every loop/.style={}}
\usetikzlibrary{arrows,shapes,decorations.markings,automata,backgrounds,petri,bending,calc,angles}
\usepackage{circuitikz}

\usepackage{fancyhdr}
\fancypagestyle{firststyle}
{
	
	\fancyfoot[l]{\footnotesize\textit{Date:} \today}
	\fancyfoot[c]{1}
	\fancyhead{}

}

\usepackage{tikz-cd} 
\tikzset{
    labl/.style={anchor=south, rotate=90, inner sep=.5mm}
}
\tikzcdset{row sep/normal=2.7em}
\tikzcdset{column sep/normal=3.6em}

\usepackage{enumitem}
\setlist[enumerate,1]{label=(\arabic*)}
\setlist[enumerate,2]{label={(\alph*)},ref={(\alph*)}}
\setlist[enumerate,3]{label={(\roman*)},ref={(\roman*)}}
\newlist{steplist}{enumerate}{1}
\setlist[steplist]{label={Step \arabic*:}, ref={Step \arabic*}}

\makeatletter
\newcommand*{\wackyenum}[1]{%
	\expandafter\@wackyenum\csname c@#1\endcsname%
}
\newcommand*{\@wackyenum}[1]{%
	$\ifcase#1\or(1')\or(2')\or(3a')\or(3b')\or(4')%
	\else\@ctrerr\fi$%
}
\AddEnumerateCounter{\wackyenum}{\@wackyenum}{(1')}
\makeatother

\usepackage[utf8]{inputenc}
\usepackage[english]{babel} 

\usepackage{amsthm}
\newtheorem{thm}{Theorem}[section]
\newtheorem{lem}[thm]{Lemma}
\newtheorem{prop}[thm]{Proposition}

\numberwithin{equation}{section}

\newtheorem{que}[thm]{Question}

\theoremstyle{definition}
\newtheorem{defn}[thm]{Definition} 
\newtheorem{exmp}[thm]{Example} 
\newtheorem{remk}[thm]{Remark}

\newcommand{\G}{\Gamma}
\newcommand{\La}{\Lambda}

\newcommand{\bfX}{\mathbf{X}}

\newcommand{\bsT}{\mathbf{T^*}}
\newcommand{\sT}{T^*}

\newcommand{\Aut}{\operatorname{Aut}}

\makeatletter
\newsavebox{\@brx}
\newcommand{\llangle}[1][]{\savebox{\@brx}{\(\m@th{#1\langle}\)}%
	\mathopen{\copy\@brx\kern-0.5\wd\@brx\usebox{\@brx}}}
\newcommand{\rrangle}[1][]{\savebox{\@brx}{\(\m@th{#1\rangle}\)}%
	\mathclose{\copy\@brx\kern-0.5\wd\@brx\usebox{\@brx}}}
\makeatother

\begin{document}
\begin{center}
{\LARGE\bf
Common overlattices in trees and trees with fins}\\
\bigskip
\bigskip
{\large Sam Shepherd\footnote{The author was supported by the CRC 1442 Geometry: Deformations and Rigidity, and by the Deutsche Forschungsgemeinschaft (DFG, German Research Foundation) under Germany's Excellence Strategy EXC 2044/2 - 390685587, Mathematics Münster: Dynamics-Geometry-Structure}}
\end{center}

\begin{abstract}
	Bass and Kulkarni proved that any pair of free uniform lattices in the automorphism group of a tree have conjugates that both lie inside a third uniform lattice (which is not necessarily free).
	We show that this does not generalise to trees with fins.
	The construction of our counter-example involves working with a certain generalisation of the universal groups of Burger and Mozes.
\end{abstract}
\tableofcontents

\bigskip
\section{Introduction}

Bass and Kulkarni proved the following theorem about uniform lattices in the automorphism group of a tree.

\begin{thm}\label{thm:BassKulkarni}\cite[Theorem 4.7]{BassKulkarni90}
	Let $T$ be a uniform tree. There exists a uniform lattice $\La<\Aut(T)$ that contains a conjugate of every free uniform lattice in $\Aut(T)$.
\end{thm}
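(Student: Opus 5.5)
We prove the theorem by building the lattice $\La$ as the fundamental group of a finite graph of finite groups that covers a fixed base graph, chosen so that every free uniform lattice is, up to conjugacy, a subgroup of it. Throughout we use Bass--Serre theory. A uniform tree $T$ is the universal cover of a finite edge-indexed graph $(A,i)$, namely the quotient $T/\Aut(T)$ with its indices. Uniform lattices in $\Aut(T)$ correspond to finite graphs of finite groups whose universal cover is $T$. A lattice $\G$ is free exactly when it acts freely, so $\G\backslash T$ is a finite graph $X_\G$ and $T$ is the universal cover of $X_\G$.

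\textbf{Step 1: common covering graph.} Every free uniform lattice $\G$ gives a finite graph $X_\G$ that covers $(A,i)$ in the edge-indexed sense, i.e.\ it has the same universal cover. The key step is to construct one finite graph $\bfX$ with a finite group $H$ acting on it, such that:
\begin{enumerate}
\item $\bfX/H=A$, with indices matching $i$;
\item for each free $\G$, $\bfX$ is a finite cover of $X_\G$, or at least $X_\G$ is covered by a graph whose fundamental group embeds suitably in $\La$.
\end{enumerate}
The natural route is the Leighton-style argument. Record $(A,i)$ as degree data. For any finite cover $X\to A$, the preimage of each vertex $a$ has size $n_a(X)$, and these sizes are proportional to a fixed positive solution vector $(n_a)$ of the balance equations $n_a\, i(e)=n_b\, i(\bar e)$. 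By Leighton's theorem, any two finite covers of $A$ have a common finite cover. However, we need a single $\La$ that works for all $\G$ at once, not just for pairs.

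\textbf{Step 2: a universal lattice.} Take $\La=\pi_1$ of the graph of groups in which each vertex group is a large finite group $G_a$ and each edge group is a subgroup of index $i(e)$. For example, the vertex groups can be symmetric groups acting on the local star structure, in the spirit of Burger--Mozes universal groups with a finite local action, but realised with finite vertex stabilisers. The goal is a lattice $\La$ whose vertex stabiliser $\La_v$ acts on the ball around $v$ as the full symmetric group on each fibre of edges of a given type.

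Next, for a free $\G$, build a $\G$-equivariant identification of $T$ with the tree of $\La$. Concretely, choose a lift of a maximal tree of $X_\G$ and use the local transitivity of $\La$ to extend the map edge by edge. Each generator of $\G$ (one per edge outside the maximal tree) is then realised by an element of $\La$, since $\La$ acts transitively on the appropriate edge-fibres at every vertex. This is where transitivity lets us match arbitrary local labelings.

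\textbf{Main obstacle.} The hard part is making $\La$ transitive enough at every vertex, so that any local labelling induced by a free $\G$ is realised, while keeping the vertex stabilisers finite and the edge indices equal to $i$. I would first try the choice $\La_v=\prod \mathrm{Sym}(i(e))$ over the edges $e$ at $a$, extended by finitely many colourings, checking that the amalgamation conditions match along edges. I would then verify that the resulting conjugating element lies in $\Aut(T)$, and that the embedding $\G\hookrightarrow\La$ is automatic once the edge-by-edge extension is $\G$-equivariant.
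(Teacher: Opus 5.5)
There is a genuine gap, and it is exactly the step you call the main obstacle: you never construct $\La$. Your extension argument needs a uniform lattice $\La$ with the same edge-indexed quotient as $\Aut(T)$. That is, $\La\backslash T=\Aut(T)\backslash T=A$, and for every vertex $v$ over $a$ and every $f\in E(a)$, $\La_v$ is transitive on the $i(f)$ edges at $v$ lying over $f$; equivalently, you need a finite grouping of $(A,i)$.

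Your candidate $\La_v=\prod\mathrm{Sym}(i(e))$ cannot be amalgamated in general. The edge group would need order $|\La_{o(e)}|/i(e)=|\La_{t(e)}|/i(\bar e)$. For the $(3,2)$-biregular tree (one edge, $i(e)=3$, $i(\bar e)=2$) this asks for a group of order $6/3=2$ and of order $2/2=1$ at once, and ``extended by finitely many colourings'' does not say how to repair this.

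Step 1 does not supply $\La$ either. One fixed finite graph cannot be a finite cover of every $X_\G$, since these have unboundedly many vertices. The direction is also reversed: $\G$ would have finite index in $\La$, so it is $X_\G$ that must map to $\La\backslash\backslash T$ as a covering of graphs of groups.

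You also ask for more than you need, which makes the obstacle look harder than it is. Full symmetric local actions, or realising ``arbitrary local labellings'', are unnecessary. Take $Y$ the lifted maximal tree and $e_i$ the edge from $\gamma_ix_i$ to $Y$, as in the paper's proof of Theorem \ref{thm:legaluniv}. You may choose the type-preserving bijections $E(y)\to E(\phi(y))$, $y\in VY$, arbitrarily. You then only need some $\lambda_i\in\La$ carrying $\phi(\gamma_i^{-1}e_i)$ to $\overline{\phi(\bar e_i)}$, which exists as soon as $\La$ is transitive on each edge type. The equivariant extension is then a local isomorphism of trees, hence an isomorphism, and $\gamma_i\mapsto\lambda_i$ is injective because $\G$ acts freely. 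So the rest of your sketch is sound.

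The paper only cites this theorem rather than proving it. However, its proof of Theorem \ref{thm:legaluniv} has precisely this two-step shape, with $\La^l(F)$ from Lemma \ref{lem:Psi} as the lattice with full local action.

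The missing existence step is easy once stated correctly. Since $T$ covers a finite graph $X_0$, the fibre sizes $N_a$ of $X_0\to A$ satisfy $N_{o(e)}i(e)=N_{t(e)}i(\bar e)=:N_e$. Let $L=\operatorname{lcm}_e N_e$, and take the graph of groups with vertex groups $N_a\mathbb{Z}/L\mathbb{Z}$ and edge groups $N_e\mathbb{Z}/L\mathbb{Z}$. It satisfies $[N_{o(e)}\mathbb{Z}/L\mathbb{Z}:N_e\mathbb{Z}/L\mathbb{Z}]=N_e/N_{o(e)}=i(e)$ for every $e$, so its fundamental group is a suitable $\La$ (pass to the barycentric subdivision if $\Aut(T)$ inverts edges). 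This step is the real content of the theorem: Theorem \ref{thm:finquotients} shows that its analogue can fail for trees with fins.
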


One consequence of this theorem is that for any uniform lattices $\G,\G'<\Aut(T)$, there is a conjugate $g\G g^{-1}$ which is commensurable with $\G'$, i.e. $g\G g^{-1}\cap\G'$ has finite index in both $g\G g^{-1}$ and $\G'$.
In turn this is equivalent to Leighton's Theorem \cite{Leighton82}, which states that any pair of finite graphs with a common universal cover have a common finite cover (see also \cite[Corollary 4.16]{BassKulkarni90}).
There have been a number of generalisations of Leighton's Theorem to settings such as decorated graphs and right-angled buildings \cite{Haglund06,Neumann10,Huang18,Woodhouse21b,BridsonShepherd22,Shepherd22,Woodhouse23,Shepherd24}, as well as results concerning settings where such a generalisation does not hold \cite{Wise96,BurgerMozes00,Forester24,DergachevaKlyachko23,DergachevaKlyachko25}.
Plus there is work of Touikan \cite{Touikan25} which compares the conjugations $g\G g^{-1}$ required for Leighton's Theorem with the conjugations required for Theorem \ref{thm:BassKulkarni}.
In a slightly different direction, the $\Aut(T)$-commensurators of uniform lattices $\G<\Aut(T)$ have been studied in \cite{Liu94,AvniLimNevo12,Radu20}.

This article is concerned with generalisations (or limitations thereof) of Theorem \ref{thm:BassKulkarni} itself, not of Leighton's Theorem.
The first thing to observe here is that the freeness assumption in Theorem \ref{thm:BassKulkarni} is necessary.
Indeed, for the $n$-regular tree $T=T_n$ (with $n\geq3$), there exist uniform lattices in $\Aut(T)$ with arbitrarily large vertex stabilisers \cite[Theorem 7.1]{BassKulkarni90}, hence there cannot exist a uniform lattice $\La<\Aut(T)$ that contains a conjugate of every uniform lattice in $\Aut(T)$.
In fact, one can obtain an even stronger negation of Theorem \ref{thm:BassKulkarni} as follows.
If $\G<\Aut(T_n)$ is a vertex transitive uniform lattice such that for each vertex $x\in VT$ the local action of $\G_x$ realises every permutation of the edges incident to $x$, then the $\G$-pointwise-stabiliser of the 5-neighbourhood of any edge in $T$ is trivial \cite[Theorem 1.4]{TrofimovWeiss95}.
Such lattices exist (see e.g. Example \ref{exmp:LaPsi} and Lemma \ref{lem:Psi}), and because of the above restriction on pointwise-stabilisers there exists such a lattice $\G$ that is maximal among all uniform lattices in $\Aut(T)$.
Pairing this lattice $\G$ with another uniform lattice $\G'<\Aut(T)$ that has larger vertex stabilisers than $\G$ (which exists by \cite{BassKulkarni90}, as discussed above) yields the following theorem.

\begin{thm}\label{thm:nregular}\cite{BassKulkarni90,TrofimovWeiss95}
	Let $T$ be the $n$-regular tree, with $n\geq3$. There exist uniform lattices $\G,\G'<\Aut(T)$ such that no uniform lattice $\La<\Aut(T)$ contains conjugates of both $\G$ and $\G'$.
\end{thm}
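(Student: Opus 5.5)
The argument follows the sketch preceding the statement. We exhibit a uniform lattice $\G$ that is maximal among uniform lattices, and then compare vertex stabilisers.

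First, fix a vertex transitive uniform lattice $\G<\Aut(T)$ such that for each $x\in VT$ the local action of $\G_x$ on the $n$ edges at $x$ is the full symmetric group $S_n$. Such lattices exist (the paper refers to Example \ref{exmp:LaPsi} and Lemma \ref{lem:Psi}). A concrete alternative is the fundamental group of a graph of groups with a single edge and two vertices $S_n\ast_{S_{n-1}}S_n$, which acts on $T_n$ with the required local action (edge transitive rather than vertex transitive, but this is enough for the argument below).

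By \cite[Theorem 1.4]{TrofimovWeiss95}, any uniform lattice $\La$ with the same local property has trivial pointwise-stabiliser of the $5$-neighbourhood of an edge. Hence its vertex stabilisers embed in the finite group of automorphisms of a ball of radius $6$, so $|\La_x|\le C(n)$ for a constant $C(n)$ depending only on $n$.

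Next we show that $\G$ can be chosen maximal. Suppose $\G\le\La$ with $\La$ a uniform lattice. Then $\La$ is again vertex transitive, and $\La_x\supseteq\G_x$ still acts as $S_n$ locally. Therefore $|\La_x|\le C(n)$. Since $\La$ is discrete and acts with one vertex orbit, its covolume is $1/|\La_x|$, so
\[
[\La:\G]=\frac{|\La_x|}{|\G_x|}\le C(n).
\]
Consequently every chain $\G<\La_1<\La_2<\cdots$ of uniform lattices has length bounded by $\log_2 C(n)$. Replacing $\G$ by a maximal element of such a chain gives a uniform lattice $\G$ that is maximal among all uniform lattices, still with the $S_n$ local action, and with $|\G_x|\le C(n)$.

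Now let $\G'<\Aut(T)$ be a uniform lattice with a vertex stabiliser of order greater than $C(n)$. This exists by \cite[Theorem 7.1]{BassKulkarni90}.

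Suppose a uniform lattice $\La$ contains $g\G g^{-1}$ and $h\G' h^{-1}$. Since $g\G g^{-1}$ is maximal, $\La=g\G g^{-1}$. But then $h\G'h^{-1}$ has a vertex stabiliser of order greater than $C(n)$ inside $g\G g^{-1}$, whose vertex stabilisers have order at most $C(n)$. This is a contradiction, which proves the theorem.

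The main obstacle is the maximality step, specifically making sure the hypotheses of \cite{TrofimovWeiss95} pass to every overlattice $\La$. The first point is that the local action of $\La_x$ is still $S_n$, which holds because it contains the local action of $\G_x$. The second is transitivity: vertex transitivity is inherited from $\G$, and if one uses the edge-transitive amalgam instead, the statement of Trofimov–Weiss for locally $2$-arc-transitive actions should still apply. Once this is settled, the covolume comparison and the rest of the argument are routine.
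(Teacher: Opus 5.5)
Your proposal is correct and follows the paper's own argument: Trofimov--Weiss bounds the vertex stabilisers of every vertex-transitive, locally-$S_n$ uniform lattice, so a maximal such $\G$ exists, and you pair it with a Bass--Kulkarni lattice $\G'$ that has larger vertex stabilisers. Your index bound $[\La:\G]=|\La_x|/|\G_x|\le C(n)$ simply makes explicit the paper's assertion that a maximal lattice exists, and the aside about $S_n*_{S_{n-1}}S_n$ is not covered by your argument as written, since the index formula uses a single vertex orbit; nothing depends on it, however, because you work with the vertex-transitive lattice from Example \ref{exmp:LaPsi}.
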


The main theorem of this article concerns what happens when one replaces the tree $T$ with a slightly more general space.
Specifically, we consider a \emph{tree with fins}, which, roughly speaking, is a CAT(0) square complex obtained from a tree by attaching ``fins'' to a given collection of subtrees (see Definition \ref{defn:graphfin} for details).
Leighton's Theorem is known to generalise to trees/graphs with fins \cite{Woodhouse21b,BridsonShepherd22}, and trees with fins have played a key role in quasi-isometric rigidity for certain graphs of free groups \cite{ShepherdWoodhouse22,CashenMacura11}.
However, Theorem \ref{thm:BassKulkarni} does not generalise to trees with fins:

\begin{thm}\label{thm:fintree}
		There exists a locally finite tree with fins $\bfX$ and free uniform lattices $\G,\G'<\Aut(\bfX)$ such that no uniform lattice $\La<\Aut(\bfX)$ contains conjugates of both $\G$ and $\G'$.
\end{thm}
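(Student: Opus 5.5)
The idea is to transfer the obstruction of Theorem \ref{thm:nregular} into the fin setting. Fins will rigidify the tree, so that a free lattice in $\Aut(\bfX)$ is forced to act on the underlying tree with controlled local actions. Concretely, take $T=T_n$ and the lattices from Theorem \ref{thm:nregular}: a maximal lattice $\Psi$-type lattice $\La_\Psi$ (Example \ref{exmp:LaPsi}, Lemma \ref{lem:Psi}) with full symmetric local actions, and a lattice with larger vertex stabilisers. These lattices have torsion, so they cannot serve as $\G,\G'$ directly. Instead I would build the tree with fins $\bfX$ so that the base tree $T$ carries fins attached along a family of subtrees (lines, or more generally subtrees of bounded valence). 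The free groups $\G,\G'$ acting on $\bfX$ will be fundamental groups of finite graphs with fins. Their ``leaf-space'' or fin-structure will encode a finite group action.

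The key steps, in order, are as follows.

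\textbf{Step 1.} Define a generalised Burger--Mozes universal group $U\le\Aut(T)$ preserving a colouring/fin pattern. Show that $\Aut(\bfX)$ acts on $T$ with image contained in such a $U$, and that the kernel is compact, so uniform lattices of $\Aut(\bfX)$ project to uniform lattices of $U$.

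\textbf{Step 2.} Construct free uniform lattices $\G,\G'<\Aut(\bfX)$. The fins should be attached so that each fin subtree $S$ has stabiliser in any lattice $\La$ acting on $S$ like a lattice in $\Aut(S)$ together with the fin combinatorics. For $\G$, the fin data should force any lattice $\La\supseteq g\G g^{-1}$ to have local actions at vertices containing a large prescribed permutation group (for instance the full symmetric group, in the spirit of Example \ref{exmp:LaPsi}). Then the Trofimov--Weiss bound \cite[Theorem 1.4]{TrofimovWeiss95}, or a generalisation of it to the group $U$, bounds the stabilisers of $\La$. For $\G'$, I would design the fins so that any $\La\supseteq g'\G' g'^{-1}$ must contain large vertex stabilisers. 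For example, $\G'$ could contain elements fixing a large ball pointwise, or the combinatorics could force a group containing $\G'$ with finite index to have stabilisers of order exceeding the bound from the first condition. This is done via a Bass--Kulkarni style argument \cite[Theorem 7.1]{BassKulkarni90}: $\La$ must normalise or commensurate the fin structure, which produces big stabilisers.

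\textbf{Step 3.} Combine. If $\La$ contains conjugates of both, then by Step 1 it acts on $T$ with full local actions, so its edge-neighbourhood stabilisers are trivial. But Step 2 forces a nontrivial element fixing a 5-neighbourhood, a contradiction.

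The main obstacle is Step 2 for $\G'$. A free lattice has trivial stabilisers, so large stabilisers in $\La$ must be forced indirectly. I would try to achieve this through the fin structure: choose $\G'$ so that two adjacent fins have incompatible periodicities, with lengths $p$ and $q$ along a line. Then any lattice containing $\G'$ must also be invariant under symmetries of $\bfX$ that exchange fin patterns. Proving that such symmetries necessarily lie in $\La$ with nontrivial pointwise stabilisers, rather than merely in $\Aut(\bfX)$, is the delicate part. There I would use the generalised universal group to show that the stabiliser in $\La$ of a fin surjects onto a finite group whose order grows with the chosen parameters.
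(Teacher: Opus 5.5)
\textbf{There is a genuine gap, and it sits in your Step 2, which carries the whole content of the theorem.} As formulated, Step 2 cannot work. A free uniform lattice is itself a uniform lattice containing itself, and it has trivial vertex stabilisers and trivial local actions. So no choice of fins can force every $\La\supseteq g\G g^{-1}$ to have large local actions. Nor can it force every $\La\supseteq g'\G' g'^{-1}$ to have large stabilisers. Also, $\G'$ acts freely, so it cannot contain elements fixing a ball.

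Any obstruction must come from the two lattices \emph{jointly}. It has to arise through products such as $\gamma''\gamma^{-1}$, with $\gamma\in\G$ and $\gamma''$ in a conjugate of $\G'$, that happen to fix a vertex of $T$. Your outline has no mechanism producing such elements, and the ``incompatible periodicities'' idea is not developed. Step 3 also credits Step 1 with giving $\La$ full local actions, whereas Step 1 only gives containment in $U$.

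\textbf{The Trofimov--Weiss/maximal-lattice strategy also targets a regime where no counterexample exists.} The paper itself remarks that such arguments are unlikely to work for free lattices. Trofimov--Weiss needs $\La_x$ to act $2$-transitively on $E(x)$. If your fins encode a universal group $U^{(l)}(F)$ as in Step 1, this forces $F$ to be transitive, hence $\tau$ is trivial. Then Proposition \ref{prop:UFtau}, Theorem \ref{thm:legaluniv} and Proposition \ref{prop:unitofins} give a single uniform lattice containing conjugates of all free uniform lattices.

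\textbf{What the paper does instead.} It works with an intransitive $F=A_5\times C_{60}$ on $T_{240}$, with $\tau=(12)(34)$.
\begin{itemize}
\item $\tau$ pairs the orbit $\Omega_1$, on which $A_5$ acts simply transitively, with the orbit $\Omega_2$, on which $C_{60}$ acts simply transitively.
\item $A_5$ also acts simply transitively on $\Omega_3$ and $\Omega_4$ (Example \ref{exmp:nooverlattice}).
\item $\G$ and $\G'$ are deck groups of two one-vertex labelled graphs that differ by a cyclic shift of the $\Omega_4$-labels.
\item For a common overlattice $\La$, the elements $\gamma''\gamma^{-1}$ lie in $\La_x$ and cycle the $\Omega_4$-labelled edges at $x$.
\item This forces $\La_x$ to surject onto the $A_5$ factor, so $\La_x$ is transitive on the $\Omega_1$-labelled edges.
\item Lemma \ref{lem:compfactors} then compares composition factors of vertex and edge stabilisers along a ray of $\Omega_1$-edges. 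It concludes that $A_5$ and $C_{60}$ have the same composition factors, which is false.
\item Proposition \ref{prop:unitofins} transfers this to the tree with fins $\bsT$, and freeness is preserved via the retraction $\bsT\to\sT$.
\end{itemize}
So the obstruction is a composition-factor mismatch between $\tau$-paired orbits, not a bound on stabiliser orders.
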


We emphasize that the lattices $\G,\G'$ in Theorem \ref{thm:fintree} are free, which makes the result quite different from Theorem \ref{thm:nregular}. In particular, arguments involving maximal lattices (as we used above for Theorem \ref{thm:nregular}) are unlikely to work for Theorem \ref{thm:fintree}.

In order to prove Theorem \ref{thm:fintree} we develop a version of the universal groups of Burger and Mozes, we prove an analogous theorem in the universal groups context (Theorem \ref{thm:univ}), and then we export this to the trees with fins setting (Section \ref{sec:fins}).
Our version of universal groups is more general than that of Burger and Mozes \cite{BurgerMozes00b}, but it is a special case of the universal groups of Reid and Smith \cite{ReidSmith22}.
Roughly speaking, our version of universal group is a subgroup $U^{(l)}(F)$ of the automorphism group of an $n$-regular tree $T$, such that the edges incident to each vertex can only be permuted according to a certain permutation group $F<S_n$ (and this is defined with the aid of an edge labelling $l$ on $T$).
The distinction between our version and that of Burger and Mozes comes down to how the edge labelling $l$ is defined (see Definitions \ref{defn:taulegal} and \ref{defn:unigroup} and Remark \ref{remk:BurgerMozes}), and this results in our version satisfying a universal property in a more general setting.
For our version, any vertex transitive group $H<\Aut(T)$ can be embedded in a universal group $U^{(l)}(F)$, such that the local action of each vertex stabiliser of $H$ is precisely $F$ (Proposition \ref{prop:universal}); for the Burger--Mozes universal groups, this only works when $H$ is edge transitive \cite[Proposition 3.2.2]{BurgerMozes00b} (which corresponds to $F<S_n$ being transitive).
The way we define the edge labelling is also crucial for the construction of the lattices required for Theorem \ref{thm:univ}, and hence for Theorem \ref{thm:fintree}.
The universal groups of Reid and Smith satisfy a universal property in an even broader setting, namely where $H$ can be any subgroup of $\Aut(T)$ (and $T$ need not be regular).
We note that our version of universal groups was developed independently of Reid and Smith, and we use different notation to them (indeed our notation is closer to that of Burger and Mozes).
The universal groups of Burger and Mozes have been influential in the study of groups acting on trees and also for groups acting on other spaces, for example the study of lattices in products of trees \cite{BurgerMozes00}.
There have been various generalisations of universal groups in the literature, not just \cite{ReidSmith22} but also \cite{LeBoudec16,Smith17,Tornier23}.

The examples of universal groups we construct also allow us to prove the following theorem regarding automorphism groups of trees with fins and their uniform lattices.

\begin{thm}\label{thm:finquotients}
There exists a locally finite uniform tree with fins $\bfX$ with the following properties:
\begin{enumerate}
	\item There is no uniform lattice $\G<\Aut(\bfX)$ such that the map $\G\backslash\bfX\to\Aut(\bfX)\backslash\bfX$ is an isomorphism.
	\item Let $X\subset\bfX$ be the underlying tree. 
	For $x\in VX$, let $E_X(x)$ denote the collection of edges of $X$ that are incident to $x$.
	There is no uniform lattice $\G<\Aut(\bfX)$ such that, for every $x\in VX$, every permutation of $E_X(x)$ induced by an automorphism in $\Aut(\bfX)_x$ is also induced by an automorphism in $\G_x$.
\end{enumerate}
\end{thm}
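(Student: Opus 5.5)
The plan is to build $\bfX$ from one of the universal groups constructed for Theorem \ref{thm:univ}, and to use the fins to force $\Aut(\bfX)$ to act on the underlying tree $X$ exactly as a universal group $U^{(l)}(F)$ does. The first step is to choose an $n$-regular tree $T$, a permutation group $F<S_n$ and a legal labelling $l$ such that $U^{(l)}(F)$ is vertex transitive with local action $F$ (as in Proposition \ref{prop:universal}), while no uniform lattice in $U^{(l)}(F)$ has every vertex stabiliser acting on the star of the vertex with full local action $F$. Theorem \ref{thm:univ} should already supply such an example, or at least one close to it.

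The second step is to encode $(T,l,F)$ in a tree with fins, following the construction of Section \ref{sec:fins}. Put $X=T$ and attach fins along suitable families of subtrees, for instance lines or subtrees determined by the labels, with varying fin thicknesses. The fins should be chosen so that an automorphism of $X$ extends to $\bfX$ if and only if it preserves the fin pattern, and so that the resulting group equals $U^{(l)}(F)$, up to a compact kernel of fin-only automorphisms. We will then need to check that $\bfX$ is locally finite and uniform.

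Part (2) then follows quickly. A uniform lattice $\G<\Aut(\bfX)$ whose vertex stabilisers realise every permutation of $E_X(x)$ coming from $\Aut(\bfX)_x$ would project to a uniform lattice in $U^{(l)}(F)$ with full local action $F$ at every vertex. Theorem \ref{thm:univ}, or the intermediate lemma behind it, forbids this.

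For part (1), suppose $\G\backslash\bfX\to\Aut(\bfX)\backslash\bfX$ is an isomorphism. Then $\G$ is vertex transitive on $X$ and has the same orbits on edges and fin cells as $\Aut(\bfX)$. Orbit equality at the link of a vertex should force $\G_x$ to have the same orbits on $E_X(x)$ as $F$. To get the full group $F$ rather than just its orbits, I would choose $F$ so that it is determined by its orbit data together with the fin structure. One way is to attach extra fins or decorations to the local star so that the local quotient records the full $F$-action; a covering-theoretic argument on the quotient graph of groups then reduces (1) to (2).

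I expect the main obstacle to be this step for (1): turning an isomorphism of quotients, which only remembers orbits, into full local action. My first attempt would be to pick $F$ so that its orbits on $E_X(x)$ and on pairs of edges sharing a fin pin down the local action. Failing that, I would look for a direct combinatorial obstruction in the universal group showing that vertex transitivity together with the correct edge orbits is already impossible for a lattice.
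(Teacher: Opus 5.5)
Your architecture matches the paper's. You take a universal group with no ``locally full'' uniform lattice and transport it to a tree with fins by Proposition \ref{prop:unitofins}. That proposition already gives $\Aut(\bsT)\cong U^{(l)}(F)$ with trivial kernel after subdividing $T$, and uniformity comes from Proposition \ref{prop:local=F}\ref{it:unimodular}. But Step 1 is wrong as written, and the key obstruction is never identified.

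With a \emph{legal} labelling no such example exists: Lemma \ref{lem:Psi} gives the uniform lattice $\La^l(F)$, for which $\sigma_{x,l}:\La^l(F)_x\to F$ is an isomorphism at every vertex. You need a $\tau$-legal labelling in which $\tau$ swaps two orbits $\Omega_1,\Omega_2$. On these, $F=F_1\times F_2$ should act through regular factors with different composition factors, as in Example \ref{exmp:fewerorbits} ($n=120$, $F=A_5\times C_{60}$, $\tau=(12)$).

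What rules out a good lattice is not Theorem \ref{thm:univ}. That theorem concerns common overlattices of two lattices and says nothing about the local action of a single lattice. The obstruction is Lemma \ref{lem:compfactors}\ref{it:compfactors}. Along a ray of $\Omega_1$-labelled edges, each edge stabiliser is simultaneously the kernel of $\G_{x_i}\to F_1$ and of $\G_{x_{i+1}}\to F_2$. Since vertex stabilisers recur up to conjugacy along the ray, $F_1$ and $F_2$ must have the same composition factors. Without this, your part (2) is only asserted.

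For part (1) you stop at what you call the main obstacle, but that obstacle is illusory: you never need the full group $F$. The hypothesis of Lemma \ref{lem:compfactors}\ref{it:compfactors} is pure orbit data, namely $\G_x$ transitive on the $\Omega_1$-part and on the $\Omega_2$-part of $E(x)$. This suffices because $F_1$ acts regularly on $\Omega_1$, so any subgroup of $F$ transitive on $\Omega_1$ projects onto $F_1$, and likewise for $F_2$. The kernel argument uses only these two projections.

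So your own observation already finishes the job: an isomorphism of quotients gives $\G$ the same cell orbits as $\Aut(\bfX)$.
\begin{itemize}
\item $U^{(l)}(F)$ is transitive on geometric edges of $T$, hence so is $\G$.
\item Each geometric edge has exactly one $\Omega_1$-labelled orientation, and $U^{(l)}(F)$ preserves each $\Omega_i$. So $\G$ is transitive on $\Omega_1$-labelled oriented edges.
\item Hence $\G$ is vertex transitive, with each $\G_x$ transitive on the $\Omega_1$-part of $E(x)$.
\item Lemma \ref{lem:compfactors}\ref{it:Omega2trans} supplies the $\Omega_2$-part, and \ref{it:compfactors} gives the contradiction.
\end{itemize}
This is the paper's argument: every uniform lattice has more orbits of geometric edges than $U^{(l)}(F)$.

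Your alternative idea of letting the fins record $F$ also works, and it is already built into Proposition \ref{prop:unitofins}. $\Aut(\bsT)_{x^*}$ permutes the fins $\{Y_{x,f}\}_{f\in F}$ simply transitively via $Y_{x,f}\mapsto Y_{x,f_xf}$. So equal orbits on fins force $\sigma_{x,l}(\G_x)=F$, which reduces (1) to (2).
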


Our final result concerns a setting where Theorem \ref{thm:BassKulkarni} does actually generalise, namely the universal groups $U^{(l)}(F)$ for which $l$ is a legal labelling (see Definition \ref{defn:taulegal}).
This is precisely the case where our definition of universal groups coincides with the original definition of Burger and Mozes.

\begin{thm}\label{thm:legaluniv}
For any legal labelling $l$ on a regular tree $T$, and any universal group $U^{(l)}(F)$, there exists a uniform lattice $\La<U^{(l)}(F)$ that contains a $U^{(l)}(F)$-conjugate of every free uniform lattice in $U^{(l)}(F)$.
\end{thm}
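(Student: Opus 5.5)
We plan to adapt the Bass--Kulkarni argument to $U^{(l)}(F)$, with the legality of $l$ supplying a consistent local coordinate system throughout. Since $l$ is legal, $l$ restricted to $E(x)$ is a bijection onto $\{1,\dots,n\}$ for every vertex $x$, and each edge gets the same label from both endpoints. This is the Burger--Mozes setting, so an automorphism $g$ lies in $U^{(l)}(F)$ precisely when its local permutations $\sigma(g,x)=l\circ g\circ l|_{E(x)}^{-1}$ lie in $F$ for all $x$.

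The key steps are as follows.

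\textbf{Step 1: reduce to labelled graphs.} Let $\G<U^{(l)}(F)$ be a free uniform lattice. Because $\G$ is free and discrete, it acts freely on $T$; we may first pass to a conjugate, or to the barycentric subdivision, to remove edge inversions. The quotient $A=\G\backslash T$ is then a finite $n$-regular graph. The labelling $l$ does not in general descend to $A$, since elements of $\G$ permute labels by elements of $F$. Instead, each $\gamma\in\G$ relates the label at $x$ to the label at $\gamma x$ via $\sigma(\gamma,x)\in F$. This data is an $F$-valued ``voltage'' on the quotient.

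\textbf{Step 2: build the target lattice $\La$.} We take $\La$ to be the $F$-analogue of the Bass--Kulkarni lattice. Concretely, consider the rose $R_n$, a single vertex with $n$ half-edges labelled $1,\dots,n$, where legality means edge $i$ pairs half-edge $i$ with half-edge $i$. Its universal cover is $T$ with labelling $l$. Let $\La_0\cong \mathbb Z/2*\cdots*\mathbb Z/2$ ($n$ factors) be the group generated by the label-preserving inversions of the edges at a base vertex. Then $\La_0$ acts simply transitively on vertices and preserves labels, so $\La_0<U^{(l)}(\{1\})\le U^{(l)}(F)$. Next, let $F$ act at the base vertex $x_0$ via the automorphisms $g_\phi\in U^{(l)}(F)$ that fix $x_0$ and have local action $\phi$ everywhere (that is, $\sigma(g_\phi,y)=\phi$ for all $y$). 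These exist and form a copy of $F$, and they normalise $\La_0$. Set $\La=\La_0\rtimes F$. This is a uniform lattice in $U^{(l)}(F)$ with vertex stabilisers isomorphic to $F$.

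\textbf{Step 3: conjugate $\G$ into $\La$.} We must find $g\in U^{(l)}(F)$ such that $g\G g^{-1}$ has local actions constant along each $\G$-orbit in a way compatible with $\La$. We aim to show that $U^{(l)}(F)$-conjugates of $\G$ can be chosen so that every element $\gamma$ has $\sigma(\gamma,\cdot)$ constant, since the elements with constant local action are exactly those in $\La$. The plan is to choose a spanning tree in $A$ and lift it to a fundamental domain $D\subset T$. We then define $g$ on $D$ to ``straighten'' the labels, i.e.\ to make the local permutations of $\G$ trivial on $D$, and extend $g$ $\G$-equivariantly. The elements $g$ are constructed vertex by vertex: we choose $\sigma(g,x)\in F$ freely, which legality permits because the label of an edge is the same at both ends. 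Freeness of $\G$ ensures there are no stabiliser constraints on these choices. The remaining holonomy lives on the free generators of $\G$ corresponding to the edges of $A$ outside the spanning tree. Each such generator $\gamma$ then maps a boundary vertex to another boundary vertex with some local permutation $\phi\in F$. Finally, we compare $\gamma$ with the unique element of $\La$ that has the same action on the relevant edge and constant local action $\phi$.

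The main obstacle is Step 3. After straightening on $D$, we need the whole of each generator $\gamma$ to have constant local action, not merely its action near $D$. The plan is to define $g$ on all of $T$ inductively outward via the $\G$-translates of $D$, using equivariance to force $\sigma(g\gamma g^{-1},y)=\phi_\gamma$ everywhere. For this to be consistent, the constraints must never conflict, and that holds exactly because $\G$ acts freely: each vertex lies in a unique translate of $D$. We would also check that $F$ acting on labels is compatible with the cocycle identity $\sigma(gh,x)=\sigma(g,hx)\sigma(h,x)$, which uses legality.
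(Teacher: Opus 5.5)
Your lattice $\Lambda=\Lambda_0\rtimes F$ is exactly the paper's $\Lambda^l(F)$, the automorphisms with constant local action. Your skeleton for Step 3 is also the paper's: a lift $D$ of a spanning tree of $\Gamma\backslash T$, a free basis $\gamma_i$ read off from the non-tree edges, and the local permutation $f_i=\sigma(\gamma_i,x_i)$ at the boundary vertex. Here $x_i\in VD$ is such that $\gamma_ix_i$ is adjacent to $D$, and $e_i$ denotes the edge from $\gamma_ix_i$ into $D$. However, Step 3, which you yourself flag as the main obstacle, is never carried out, and the justification you give is wrong in two places.

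\emph{First}, you cannot choose $\sigma(g,x)\in F$ freely. If $e$ joins $x$ to $y$ with $l(e)=i$, legality gives $\sigma(g,x)(i)=l(ge)=l(g\bar e)=\sigma(g,y)(i)$. So local actions at adjacent vertices are tied together, and ``straightening'' on $D$ by free choices is neither available nor needed.

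\emph{Second}, consistency of the extension does not follow from each vertex lying in a unique translate of $D$; that only makes the definition on vertices unambiguous. Prescribing only the local actions $\sigma(g\gamma g^{-1},\cdot)\equiv\phi_\gamma$ does not determine which element $g\gamma g^{-1}$ is, hence where $g$ sends $\gamma D$. The one genuine compatibility condition lives on the edges joining $D$ to $\gamma_i^{\pm1}D$, and your argument never examines them.

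The missing step uses the element you already named. Let $\lambda_i\in\Lambda$ be the unique element with $\lambda_ix_i=\gamma_ix_i$ and constant local action $f_i$. Comparing labels shows that $\lambda_i$ and $\gamma_i$ agree on all of $E(x_i)$. Since $\Gamma$ is free on the $\gamma_i$, there is a homomorphism $\rho:\Gamma\to\Lambda$ with $\rho(\gamma_i)=\lambda_i$. For $h\in\Gamma$, $v\in VD$ and $e\in E(v)$, put $g(hv)=\rho(h)v$ and $g(he)=\rho(h)e$; in particular $g$ is the identity on $D$ and on the edges issuing from it.

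At the crossing edge this gives $g(e_i)=\lambda_i\gamma_i^{-1}e_i=e_i$ and $g(\bar e_i)=\bar e_i$. Every edge between distinct translates of $D$ is a $\Gamma$-translate of some $e_i$ or $\bar e_i$, so $\rho$-equivariance shows that $g$ commutes with edge reversal everywhere. Hence $g$ is a local isomorphism $T\to T$, therefore an automorphism, and it satisfies $gh=\rho(h)g$. Moreover $\sigma(g,hv)=\Psi(\rho(h))\,\sigma(h,v)^{-1}\in F$, so $g\in U^{(l)}(F)$ and $g\Gamma g^{-1}=\rho(\Gamma)<\Lambda$.

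The paper packages the same data differently. It builds a $\theta$-equivariant legal relabelling $l'$ with $\theta(\gamma_i)=f_i$ (in effect $\theta=\Psi\circ\rho$) and checks legality at the same edges $e_i$. It then obtains $g$ from the Burger--Mozes fact that any two legal labellings differ by an automorphism, together with Lemma~\ref{lem:lcircg}. Your direct construction of $g$ avoids that citation, at the cost of the local-isomorphism check.

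Also drop the barycentric subdivision in Step 1. A torsion-free lattice cannot invert an edge, since the square of an inversion lies in a finite vertex stabiliser. In any case, subdividing would change $T$ and $U^{(l)}(F)$.
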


The above theorems demonstrate contrasting behaviours in different settings.
On the one hand we have the behaviour of Theorems \ref{thm:BassKulkarni} and \ref{thm:legaluniv} for trees and certain universal groups (and the latter can be converted into a tree with fins example via Proposition \ref{prop:unitofins}), while on the other hand we have the behaviour of Theorem \ref{thm:fintree} for certain trees with fins.
It is therefore natural to ask when these different behaviours occur.

\begin{que}
	For which locally finite trees with fins $\bfX$ is it true that any pair of free uniform lattices $\G,\G'<\Aut(\bfX)$ can be conjugated into a third uniform lattice $\La<\Aut(\bfX)$? And when does there exist a single $\La$ that works for all choices of $\G,\G'$?
	And what if we replace $\Aut(\bfX)$ with a universal group $U^{(l)}(F)$ (or more generally a universal group from \cite{ReidSmith22})?
\end{que}

We provide some brief preliminaries in Section \ref{sec:prelim} regarding graphs and automorphism groups of trees.
In Section \ref{sec:universal} we develop our version of universal groups, and we prove Theorem \ref{thm:univ} (a universal groups version of Theorem \ref{thm:fintree}) and Theorem \ref{thm:legaluniv}. In Section \ref{sec:fins} we introduce trees with fins, and we deduce Theorems \ref{thm:fintree} and \ref{thm:finquotients} from the work in Section \ref{sec:universal}.

\textbf{Acknowledgements:}\,
The author would like to thank Colin Reid for some helpful discussions and comments.

\bigskip
\section{Preliminaries}\label{sec:prelim}

\begin{defn}(Graphs)\label{defn:graph}\\
	A \emph{graph} $X$ is a 1-dimensional cell complex. 
	We write $VX$ for the set of vertices (0-cells) and $EX$ for the set of edges (oriented 1-cells).
	Each edge $e$ has an \emph{origin} $o(e)\in VX$ and a \emph{terminus} $t(e)\in VX$.
	Associated to $e$ is the \emph{inverse edge} $\bar{e}\in EX$ given by reversing the orientation, which satisfies $o(\bar{e})=t(e)$ and $t(\bar{e})=o(e)$.
	The pair $\{e,\bar{e}\}$ is called a \emph{geometric edge} (and corresponds to a 1-cell in $X$).
	We write $E(x)$ for the set of edges with origin $x$.
\end{defn}

\begin{defn}(Uniform lattices in $\Aut(T)$)\label{defn:unilattice}\\
	Let $T$ be a locally finite tree. We write $\Aut(T)$ for the group of automorphisms of $T$, which we regard as a locally compact group using the compact-open topology.
	Following \cite{BassKulkarni90}, we say that $\G<\Aut(T)$ is a \emph{uniform lattice} if $\G$ acts properly and cocompactly on $T$ (i.e. the action of $\G$ on $VT$ has finite stabilisers and finitely many orbits), and we say that $T$ is \emph{uniform} if $\Aut(T)$ contains a uniform lattice.
	Note that $T$ is uniform if and only if it covers a finite graph \cite[Corollary 4.10]{BassKulkarni90}.
\end{defn}

\bigskip
\section{Universal groups}\label{sec:universal}

In this section let $T=T_n$ be the $n$-regular tree, with $n\geq3$.

\begin{defn}($\tau$-legal labellings)\\\label{defn:taulegal}
	Let $F<S_n$. Let $\sqcup_{i\in I}\Omega_i$ be the partition of $\{1,2,\dots,n\}$ into $F$-orbits.
	Let $\tau$ be an involution of $I$.
	A map $l: ET\to\{1,\dots,n\}$ is called a \emph{$\tau$-legal labelling of $T$} if
	\begin{enumerate}
		\item\label{it:l_x} for all $x\in VT$, the map $l_x:E(x)\to\{1,\dots,n\}$ given by $e\mapsto l(e)$ is a bijection, and
		\item\label{it:bare} for all $e\in ET$, if $l(e)\in \Omega_i$ then $l(\bar{e})\in\Omega_{\tau(i)}$.
	\end{enumerate}
Further, if $\tau$ is trivial and $l(e)=l(\bar{e})$ for all $e\in ET$ then we say that $l$ is a \emph{legal labelling of $T$}.
\end{defn}

\begin{defn}(Universal groups)\\\label{defn:unigroup}
	The \emph{universal group of $T$ with respect to $l$ and $F$} is
	\begin{equation*}
		U^{(l)}(F)=\{g\in\Aut(T)\mid l_{gx}\circ g\circ l_x^{-1}\in F\text{ for all }x\in VT\}.
	\end{equation*}
In particular, for $x\in VT$ the stabiliser $U^{(l)}(F)_x$ admits a homomorphism $\sigma_{x,l}:U^{(l)}(F)_x\to F$ given by $g\mapsto l_x\circ g\circ l_x^{-1}$ (and we will see later in Proposition \ref{prop:local=F} that this homomorphism is surjective).
\end{defn}

\begin{remk}\label{remk:BurgerMozes}
The universal groups developed and studied by Burger and Mozes in \cite{BurgerMozes00b} are the same as in Definition \ref{defn:unigroup} but restricted to the case where $l$ is a legal labelling, rather than a $\tau$-legal labelling.
\end{remk}

The universal groups satisfy the following universal property for vertex transitive subgroups of $\Aut(T)$.
This extends the universal property of the Burger--Mozes universal groups \cite[Proposition 3.2.2]{BurgerMozes00b}, which only applied to edge transitive subgroups of $\Aut(T)$ (corresponding to $F<S_n$ being a transitive permutation group).

\begin{prop}\label{prop:universal}
	Let $H<\Aut(T)$ be vertex transitive. Then $H$ is contained in some universal group $U^{(l)}(F)$ for some $F<S_n$ and some $\tau$-legal labelling $l$ of $T$.
	Moreover, we can choose $U^{(l)}(F)$ so that the map $\sigma_{x,l}:H_x\to F$ is surjective for every $x\in VT$.
\end{prop}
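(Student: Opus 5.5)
Fix a base vertex $x_0\in VT$ and a bijection $l_{x_0}:E(x_0)\to\{1,\dots,n\}$. Let $F$ be the image of $H_{x_0}$ acting on $E(x_0)$, transported through $l_{x_0}$, so $F=\{l_{x_0}\circ h\circ l_{x_0}^{-1}: h\in H_{x_0}\}<S_n$. Since $H$ is vertex transitive, for each $x\in VT$ choose $h_x\in H$ with $h_xx_0=x$ (take $h_{x_0}=1$) and set $l_x=l_{x_0}\circ h_x^{-1}|_{E(x)}$. These bijections together define a map $l:ET\to\{1,\dots,n\}$ satisfying condition (1) of Definition \ref{defn:taulegal}.

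For any $g\in H$ and $x\in VT$ we have $l_{gx}\circ g\circ l_x^{-1}=l_{x_0}\circ(h_{gx}^{-1}gh_x)\circ l_{x_0}^{-1}$. Here $h_{gx}^{-1}gh_x\in H_{x_0}$, so this permutation lies in $F$, and hence $H<U^{(l)}(F)$. For surjectivity of $\sigma_{x,l}$ on $H_x$: given $f\in F$, write $f=l_{x_0}\circ k\circ l_{x_0}^{-1}$ with $k\in H_{x_0}$. Then $g=h_xkh_x^{-1}\in H_x$ satisfies $\sigma_{x,l}(g)=f$. So the local-action part is straightforward.

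The main obstacle is condition (2): there must be an involution $\tau$ of the index set $I$ of $F$-orbits $\Omega_i$ such that $l(e)\in\Omega_i$ implies $l(\bar e)\in\Omega_{\tau(i)}$. I would argue via $H$-orbits of edges. First, the $F$-orbit of $l(e)$ for $e\in E(x)$ corresponds exactly to the $H_x$-orbit of $e$ in $E(x)$, by the conjugation identity above. Vertex transitivity gives a bijection between $H$-orbits on $ET$ and $H_{x_0}$-orbits on $E(x_0)$ via $e\mapsto$ the $H_{x_0}$-orbit of $h_{o(e)}^{-1}e$. This map is well defined because any $g$ with $g\,o(e)=x_0$ differs from $h_{o(e)}^{-1}$ by an element of $H_{x_0}$. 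So the $F$-orbit of $l(e)$ depends only on the $H$-orbit $He$. Edge inversion $e\mapsto\bar e$ is $H$-equivariant, so it induces an involution on the set of $H$-orbits of edges. Transporting this through the bijection gives an involution $\tau$ of $I$ with $l(\bar e)\in\Omega_{\tau(i)}$ whenever $l(e)\in\Omega_i$. The point to verify carefully is that $\tau$ is well defined, i.e. that $l(\bar e)$'s orbit depends only on $l(e)$'s orbit. This holds because both are determined by $He$ and $H\bar e$ respectively, and the correspondence between $H$-orbits of edges and indices $i\in I$ is a bijection. The fact that $\tau^2=\mathrm{id}$ follows from $\bar{\bar e}=e$.
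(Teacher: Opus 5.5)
Your proof is correct and follows essentially the same route as the paper: you take the same $F$ (the local action of $H_{x_0}$ transported by $l_{x_0}$) and define the labels by pulling back $l_{x_0}$ along chosen elements $h_x$ (you use $h_x x_0=x$ where the paper uses $h_x x=x_0$, which is immaterial). As in the paper, you obtain $\tau$ from the involution that edge inversion induces on $H$-orbits of edges, identified with the $F$-orbits via vertex transitivity; the only difference is that the paper indexes the $F$-orbits by $H$-orbits of edges from the outset, whereas you derive this identification afterwards.
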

\begin{proof}
	Let $I$ denote the set of $H$-orbits in $ET$ and let $\tau:I\to I$ be the involution such that $e\in i\in I$ implies $\bar{e}\in\tau(i)$ for all $e\in ET$.
	Fix a vertex $x_0\in VT$ and a bijection $l_{x_0}:E(x_0)\to\{1,\dots,n\}$.
	The local action of the stabiliser $H_{x_0}$ defines a permutation group $F:=l_{x_0}\circ H_{x_0}|_{E(x_0)}\circ l_{x_0}^{-1}<S_n$.
	The $F$-orbits in $\{1,\dots,n\}$ correspond to the $H_{x_0}$-orbits in $E(x_0)$, which in turn correspond to the $H$-orbits in $ET$ (since $H$ is vertex transitive).
	So we can define a partition $\{1,\dots,n\}=\sqcup_{i\in I}\Omega_i$ into $F$-orbits such that $l_{x_0}(e)\in\Omega_i$ if and only if $e\in i\in I$.
	
	We now extend $l$ to the rest of $ET$.
	For each $x\in VT$ choose an element $h_x\in H$ such that $h_xx=x_0$ (and choose $h_{x_0}=1$).
	Set
	\begin{equation}\label{definel}
l|_{E(x)}=l_{x_0}\circ h_x|_{E(x)}.
	\end{equation} 
	This defines a labelling $l:ET\to\{1,\dots,n\}$.
	The labelling $l$ satisfies Definition \ref{defn:taulegal}\ref{it:l_x} as a direct consequence of (\ref{definel}).
	Moreover, it follows from (\ref{definel}) and the way we defined the partition $\sqcup_{i\in I}\Omega_i$ that, for any $e\in ET$, we have $l(e)\in\Omega_i$ if and only if $e\in i\in I$.
	Definition \ref{defn:taulegal}\ref{it:bare} then follows from the fact that $e\in i\in I$ implies $\bar{e}\in\tau(i)$.
	Hence $l$ is a $\tau$-legal labelling.
	
	We now show that $H< U^{(l)}(F)$.
	Let $h\in H$ and $x\in VT$, and put $y=hx$.
	Then 
	\begin{equation}\label{l_yhl_x}
		l_y\circ h\circ l_x^{-1}=l_{x_0}\circ h_y\circ h\circ h_x^{-1}\circ l_{x_0}^{-1}\in F,
	\end{equation}
	because $h_yhh_x^{-1}\in H_{x_0}$. Thus $H< U^{(l)}(F)$.
	Moreover, if we specialise (\ref{l_yhl_x}) to the case where $h\in H_x$ (so $y=x$), we see that $\sigma_{x,l}(h)=l_x\circ h\circ l_x^{-1}=l_{x_0}\circ h_xhh_x^{-1}\circ l_{x_0}^{-1}$.
	As $h$ ranges over every element of $H_x$, the conjugate $h_xhh_x^{-1}$ will range over every element of $H_{x_0}$, hence $\sigma_{x,l}(h)$ will range over every element of $F$ (by construction of $F$).
	Thus $\sigma_{x,l}:H_x\to F$ is surjective.
\end{proof}

We now collect some lemmas which explore the extent to which $U^{(l)}(F)$ depends on the labelling $l$.
For $(f_x)\in F^{VT}$ and $l: ET\to\{1,\dots,n\}$ a map, we write $(f_x)\circ l$ for the map $ET\to\{1,\dots,n\}$ defined by $e\mapsto f_x(l(e))$ whenever $e\in E(x)$ and $x\in VT$.

\begin{lem}\label{lem:Ul=Ul'}
Let $(f_x)\in F^{VT}$ and let $l$ be a $\tau$-legal labelling of $T$.
Then $l'=(f_x)\circ l$ is also a $\tau$-legal labelling of $T$, and $U^{(l)}(F)=U^{(l')}(F)$.
\end{lem}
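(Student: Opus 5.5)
The plan is to verify the two conditions of Definition \ref{defn:taulegal} for $l'$ directly, and then to show that the defining condition of $U^{(l)}(F)$ is unchanged when $l$ is replaced by $l'$. The key observation is that for each $x\in VT$ we have $l'_x=f_x\circ l_x$ as maps $E(x)\to\{1,\dots,n\}$.

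\emph{Legality of $l'$.} Condition \ref{it:l_x} holds because $l'_x=f_x\circ l_x$ is a composition of two bijections. For condition \ref{it:bare}, take $e\in ET$ with $o(e)=x$ and $t(e)=y$, so that $\bar e\in E(y)$. Each $f_x\in F$ preserves every $F$-orbit $\Omega_i$. Hence $l'(e)=f_x(l(e))$ lies in the same $\Omega_i$ as $l(e)$, and likewise $l'(\bar e)=f_y(l(\bar e))$ lies in the same orbit as $l(\bar e)$. So $l'(e)\in\Omega_i$ implies $l(e)\in\Omega_i$, which implies $l(\bar e)\in\Omega_{\tau(i)}$, which in turn gives $l'(\bar e)\in\Omega_{\tau(i)}$. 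The partition and $\tau$ are the same ones as for $l$, since both depend only on $F$.

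\emph{Equality of the groups.} Let $g\in\Aut(T)$ and $x\in VT$. Then
\begin{equation*}
l'_{gx}\circ g\circ (l'_x)^{-1}=f_{gx}\circ\bigl(l_{gx}\circ g\circ l_x^{-1}\bigr)\circ f_x^{-1}.
\end{equation*}
Because $f_{gx},f_x\in F$ and $F$ is a group, the left-hand side lies in $F$ if and only if $l_{gx}\circ g\circ l_x^{-1}$ lies in $F$. Quantifying over all $x$ gives $U^{(l)}(F)=U^{(l')}(F)$. Alternatively, one can prove only one inclusion and then apply it symmetrically, noting that $l=(f_x^{-1})\circ l'$.

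There is no real obstacle here. The only point needing care is checking that $\tau$-legality uses the orbit partition of $F$ itself, so that orbit-preservation by the $f_x$ is exactly what makes condition \ref{it:bare} survive the relabelling.
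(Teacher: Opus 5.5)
Your proposal is correct and follows the paper's proof essentially verbatim. You check $\tau$-legality of $l'$ via the fact that each $f_x$ preserves every $F$-orbit $\Omega_i$, and you prove $U^{(l)}(F)=U^{(l')}(F)$ from the identity $l'_{gx}\circ g\circ(l'_x)^{-1}=f_{gx}\circ\bigl(l_{gx}\circ g\circ l_x^{-1}\bigr)\circ f_x^{-1}$; this is exactly the paper's argument.
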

\begin{proof}
The fact that $l'$ is a $\tau$-legal labelling follows from the fact that each $f_x$ is a permutation of $\{1,\dots,n\}$ that stabilises each set $\Omega_i$.
For $g\in \Aut(T)$ and $x\in VT$, the condition $l_{gx}\circ g\circ l_x^{-1}\in F$ is equivalent to the condition
\begin{equation*}
	f_{gx}\circ l_{gx}\circ g\circ l_x^{-1}\circ f_x^{-1}=l'_{gx}\circ g\circ (l'_x)^{-1}\in F,
\end{equation*}
hence $U^{(l)}(F)=U^{(l')}(F)$.
\end{proof}

\begin{lem}\label{lem:lcircg}
	Let $l$ be a $\tau$-legal labelling of $T$ and let $g\in\Aut(T)$. Then $g\in U^{(l)}(F)$ if and only if $l\circ g=(f_x)\circ l$ for some $(f_x)\in F^{VT}$. In this case $U^{(l)}(F)=U^{(l\circ g)}(F)$.
\end{lem}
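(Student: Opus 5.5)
We unwind the definitions; the whole proof is a bookkeeping exercise with the composition conventions. Here $l\circ g$ denotes the labelling $e\mapsto l(ge)$. For $x\in VT$, its restriction to $E(x)$ is $(l\circ g)_x=l_{gx}\circ g|_{E(x)}$, since $g$ maps $E(x)$ bijectively onto $E(gx)$.

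For the forward direction, suppose $g\in U^{(l)}(F)$. For each $x$ set $f_x:=l_{gx}\circ g\circ l_x^{-1}$, which lies in $F$ by Definition \ref{defn:unigroup}. Then on $E(x)$ we have
\begin{equation*}
(l\circ g)_x=l_{gx}\circ g|_{E(x)}=f_x\circ l_x,
\end{equation*}
which says exactly that $l\circ g=(f_x)\circ l$.

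For the converse, suppose $l\circ g=(f_x)\circ l$ with $(f_x)\in F^{VT}$. Restricting to $E(x)$ gives $l_{gx}\circ g|_{E(x)}=f_x\circ l_x$. Composing on the right with $l_x^{-1}$, which is allowed by Definition \ref{defn:taulegal}\ref{it:l_x}, gives $l_{gx}\circ g\circ l_x^{-1}=f_x\in F$ for every $x$. Hence $g\in U^{(l)}(F)$.

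For the last claim, in this case $l\circ g=(f_x)\circ l$, and Lemma \ref{lem:Ul=Ul'} applies directly. It shows that $l\circ g$ is a $\tau$-legal labelling and that $U^{(l\circ g)}(F)=U^{(l)}(F)$.

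There is no real obstacle. The only point needing care is to check that $(f_x)\circ l$ is indexed at the origin $x$ of each edge, and that this matches $(l\circ g)_x$ being built from $l_{gx}$ and $g$ on $E(x)$. Both conventions index by the origin $x$ of the edge, so they agree.
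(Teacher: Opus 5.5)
Your proposal is correct and matches the paper's proof: unwind Definition \ref{defn:unigroup} to get $l_{gx}\circ g\circ l_x^{-1}=f_x$ for each $x$, rewrite this as $l\circ g=(f_x)\circ l$, and then apply Lemma \ref{lem:Ul=Ul'}. Your check that both sides are indexed by the origin of each edge is exactly the bookkeeping the paper leaves implicit.
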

\begin{proof}
	We have $g\in U^{(l)}(F)$ if and only if there exists $(f_x)\in F^{VT}$ such that $l_{gx}\circ g\circ l_x^{-1}=f_x$ for each $x\in VT$.
	In turn, this is equivalent to $l\circ g=(f_x)\circ l$.
	In this case we have $U^{(l)}(F)=U^{(l\circ g)}(F)$ by Lemma \ref{lem:Ul=Ul'}.
\end{proof}

\begin{lem}\label{lem:ll'}
	Let $l,l'$ be $\tau$-legal labellings of $T$, let $x_0,x_0'\in VT$, and let $f_0\in F$.
	Then there exists $g\in\Aut(T)$ and $(f_x)\in F^{VT}$ such that:
	\begin{enumerate}
		\item\label{it:x0x'0} $gx_0=x'_0$,
		\item\label{it:fx0} $f_{x_0}=f_0$, and
		\item\label{it:ll'} $l'\circ g=(f_x)\circ l$.
	\end{enumerate}
\end{lem}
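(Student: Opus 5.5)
We build $g$ inductively on balls around $x_0$, together with the elements $f_x$. Condition \ref{it:ll'} says that for every $x\in VT$ and $e\in E(x)$ we need $l'(ge)=f_x(l(e))$, i.e. $l'_{gx}\circ g|_{E(x)}=f_x\circ l_x$. Equivalently, $g|_{E(x)}=(l'_{gx})^{-1}\circ f_x\circ l_x$. So once $gx$ and $f_x$ are chosen, $g$ on the star $E(x)$ is forced. The task is to choose the $f_x$ consistently.

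\emph{Base step.} Set $gx_0=x_0'$ and $f_{x_0}=f_0$. Define $g$ on $E(x_0)$ by $g|_{E(x_0)}=(l'_{x_0'})^{-1}\circ f_0\circ l_{x_0}$. This is a bijection $E(x_0)\to E(x_0')$ because $l_{x_0}$ and $l'_{x_0'}$ are bijections by Definition~\ref{defn:taulegal}\ref{it:l_x}. It also determines $g$ on the neighbours of $x_0$.

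\emph{Inductive step.} Suppose $g$ is defined on the ball of radius $r$ about $x_0$ as a graph isomorphism onto the ball of radius $r$ about $x_0'$. Suppose also that $f_x$ is chosen and \ref{it:ll'} holds at every vertex $x$ at distance $<r$.

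Let $y$ be at distance $r\geq1$, and let $e\in E(y)$ be the edge pointing back toward $x_0$. Then $\bar e\in E(x)$ for some $x$ at distance $r-1$, so $g\bar e$ is already defined and $ge=\overline{g\bar e}$ is determined. We need $f_y\in F$ with $f_y(l(e))=l'(ge)$; the rest of $g|_{E(y)}$ is then defined by the forced formula above.

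Such an $f_y$ exists precisely when $l(e)$ and $l'(ge)$ lie in the same $F$-orbit $\Omega_j$. Let $\Omega_i$ be the orbit containing $l(\bar e)$. By \ref{it:ll'} at $x$, $l'(g\bar e)=f_x(l(\bar e))\in\Omega_i$, since $f_x$ preserves each $F$-orbit. Applying Definition~\ref{defn:taulegal}\ref{it:bare} to $l$ gives $l(e)\in\Omega_{\tau(i)}$. Applying it to $l'$ gives $l'(ge)=l'(\overline{g\bar e})\in\Omega_{\tau(i)}$. The two orbits agree because $l$ and $l'$ are $\tau$-legal for the same $F$ and the same $\tau$. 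This orbit-matching is the only real obstacle, and it is exactly what $\tau$-legality provides.

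Once $f_y$ is chosen, $g|_{E(y)}$ is a bijection $E(y)\to E(gy)$ that agrees with the already-defined value on $e$. The other $n-1$ edges of $E(y)$ lead to new vertices at distance $r+1$, and they map to the edges of $E(gy)$ leading away from $x_0'$. Since $T$ is a tree, distinct vertices at distance $r$ have disjoint sets of outward edges, so these extensions do not conflict. They therefore glue to an isomorphism of balls of radius $r+1$.

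Taking the union over all $r$ gives $g\in\Aut(T)$ with $gx_0=x_0'$. It comes with $(f_x)\in F^{VT}$ satisfying $f_{x_0}=f_0$, and \ref{it:ll'} holds at every vertex by construction.
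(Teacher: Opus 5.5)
Your proposal is correct and follows essentially the same route as the paper. Both arguments build $g$ and $(f_x)$ inductively on balls about $x_0$. In both, $g|_{E(x)}$ is forced by $l'_{gx}\circ g|_{E(x)}=f_x\circ l_x$, and $\tau$-legality of both $l$ and $l'$ is used to guarantee that $l(e)$ and $l'(ge)$ lie in the same $F$-orbit $\Omega_{\tau(i)}$, so a suitable $f_y$ exists.
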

\begin{proof}
	We construct $g$ and $(f_x)$ inductively.
	For each integer $k\geq 0$ we define $g$ on the $(k+1)$-ball about $x_0$ and we define $(f_x)$ for $x$ in the $k$-ball about $x_0$.
	We require properties \ref{it:x0x'0}--\ref{it:ll'} to be satisfied at each stage (where defined).
	
	For $k=0$, we start by defining $gx_0=x'_0$ and $f_{x_0}=f_0$.
	Then we define $g|_{E(x_0)}:E(x_0)\to E(x'_0)$ to be the bijection that satisfies $l'_{x'_0}\circ g|_{E(x_0)}=f_{x_0}\circ l_{x_0}$.
	This uniquely determines $g$ on the 1-ball about $x_0$ (and it defines an isomorphism from the 1-ball about $x_0$ to the 1-ball about $x'_0$).
	
	Now let $k\geq1$, and suppose we have defined $g$ on the $k$-ball about $x_0$ and defined $(f_x)$ for $x$ in the $(k-1)$-ball about $x_0$.
	Let $x\in VT$ be at distance $k$ from $x_0$ and say $gx=x'$.
	Let $e\in E(x)$ be the edge pointing towards $x_0$, let $y=t(e)$ and let $ge=e'$ and $gy=y'$.
	We have $l'_{y'}\circ g|_{E(y)}=f_y\circ l_y$, so $l'(\bar{e}')=f_y(l(\bar{e}))$; thus $l'(\bar{e}'),l(\bar{e})$ are in the same $F$-orbit, say $\Omega_i$.
	As $l,l'$ are both $\tau$-legal labellings, we then have $l'(e'),l(e)\in\Omega_{\tau(i)}$.
	Let $f_x\in F$ be some element with $f_x(l(e))=l'(e')$.
	Then define $g|_{E(x)}:E(x)\to E(x')$ to be the bijection that satisfies $l'_{x'}\circ g|_{E(x)}=f_x\circ l_x$.
	Note that we still have $ge=e'$ since $f_x(l(e))=l'(e')$.
	Doing this for all $x\in VT$ at distance $k$ from $x_0$, we extend the domain of definition of $g$ from the $k$-ball about $x_0$ to the $(k+1)$-ball about $x_0$ (and it defines an isomorphism from the $(k+1)$-ball about $x_0$ to the $(k+1)$-ball about $x'_0$). This also extends the domain of definition of $(f_x)$ from the $(k-1)$-ball about $x_0$ to the $k$-ball about $x_0$. Moreover, properties \ref{it:x0x'0}--\ref{it:ll'} are satisfied (where defined).
\end{proof}

We can now show that $U^{(l)}(F)$ only depends on $F$ and $\tau$ up to conjugacy in $\Aut(T)$. This is consistent with the classical Burger--Mozes universal groups, where the dependence is only on $F$ -- see \cite[\S3.2]{BurgerMozes00b} or \cite[Corollary 4.4]{GarridoGlasnerTornier18}.

\begin{prop}\label{prop:UFtau}
Let $l,l'$ be $\tau$-legal labellings of $T$. Then $U^{(l)}(F)$ and $U^{(l')}(F)$ are conjugate in $\Aut(T)$.
Hence $U^{(l)}(F)$ only depends on $F$ and $\tau$ up to conjugacy.
\end{prop}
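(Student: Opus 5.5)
The plan is to combine Lemma \ref{lem:ll'} with Lemma \ref{lem:Ul=Ul'}. Fix any vertices $x_0,x_0'\in VT$ and take $f_0=1\in F$ (any choice will do). Lemma \ref{lem:ll'} then provides $g\in\Aut(T)$ and $(f_x)\in F^{VT}$ with $l'\circ g=(f_x)\circ l$. By Lemma \ref{lem:Ul=Ul'}, the labelling $(f_x)\circ l$ is $\tau$-legal and defines the same universal group as $l$. It follows that
\begin{equation*}
	U^{(l'\circ g)}(F)=U^{((f_x)\circ l)}(F)=U^{(l)}(F).
\end{equation*}

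It remains to prove the general identity $U^{(l'\circ g)}(F)=g^{-1}U^{(l')}(F)g$. First we record how the local maps transform. For a vertex $x$, the map $(l'\circ g)_x:E(x)\to\{1,\dots,n\}$ equals $l'_{gx}\circ g|_{E(x)}$. Now take $h\in\Aut(T)$ and $x\in VT$. The defining condition for $h\in U^{(l'\circ g)}(F)$ at $x$ reads
\begin{equation*}
	(l'\circ g)_{hx}\circ h\circ (l'\circ g)_x^{-1}=l'_{ghx}\circ g\circ h\circ g^{-1}\circ (l'_{gx})^{-1}\in F.
\end{equation*}
Write $y=gx$. The right-hand side is exactly the condition for $ghg^{-1}$ at $y$, since $ghg^{-1}y=ghx$. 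As $x$ ranges over $VT$, so does $y$. Hence $h\in U^{(l'\circ g)}(F)$ if and only if $ghg^{-1}\in U^{(l')}(F)$, which gives $U^{(l)}(F)=g^{-1}U^{(l')}(F)g$.

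For the "hence" clause, recall that $U^{(l)}(F)$ is defined in terms of $F$ and $l$. By Definition \ref{defn:taulegal}, $\tau$ is determined by $l$ and the $F$-orbit partition, and the first part shows that any two $\tau$-legal labellings for the same $F,\tau$ give conjugate groups. Such labellings exist whenever $\tau$ is compatible with the orbit sizes (via the inductive construction of Lemma \ref{lem:ll'}), but existence is not needed for the statement.

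No step here is a serious obstacle. The only point needing care is the bookkeeping of the local maps, namely $(l'\circ g)_x=l'_{gx}\circ g|_{E(x)}$, together with getting the direction of conjugation right.
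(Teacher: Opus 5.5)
Your proof is correct and is the paper's argument: Lemma \ref{lem:ll'} gives $l'\circ g=(f_x)\circ l$, Lemma \ref{lem:Ul=Ul'} identifies $U^{(l'\circ g)}(F)$ with $U^{(l)}(F)$, and your explicit check of $U^{(l'\circ g)}(F)=g^{-1}U^{(l')}(F)g$ spells out the step the paper calls clear. One aside on your closing remark: on the tree, $\tau$-legal labellings exist for every involution $\tau$ of $I$, with no orbit-size condition; the condition $|\Omega_i|=|\Omega_{\tau(i)}|$ is only needed for uniform lattices, as in Proposition \ref{prop:local=F}\ref{it:unimodular}.
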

\begin{proof}
	By Lemma \ref{lem:ll'}, there are $g\in\Aut(T)$ and $(f_x)\in F^{VT}$ such that $l'\circ g=(f_x)\circ l$.
	Clearly $U^{(l'\circ g)}(F)=g^{-1}U^{(l')}(F)g$, and this group is equal to $U^{(l)}(F)$ by Lemma \ref{lem:Ul=Ul'}.
\end{proof}

We also collect the following consequences of Lemma \ref{lem:ll'}. Again this is consistent with the classical Burger--Mozes universal groups \cite{BurgerMozes00b,GarridoGlasnerTornier18}.

\begin{prop}\label{prop:local=F}
	Let $l$ be a $\tau$-legal labelling of $T$.
	\begin{enumerate}
		\item\label{it:vertextrans} $U^{(l)}(F)$ acts vertex transitively on $T$.
		\item\label{it:local=F} For $x\in VT$, the homomorphism $\sigma_{x,l}:U^{(l)}(F)_x\to F$ (from Definition \ref{defn:unigroup}) is surjective.
		\item\label{it:unimodular} If $|\Omega_i|=|\Omega_{\tau(i)}|$ for all $i\in I$ then $U^{(l)}(F)$ contains a uniform lattice.
	\end{enumerate}	
\end{prop}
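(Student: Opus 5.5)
Parts \ref{it:vertextrans} and \ref{it:local=F} follow by applying Lemma \ref{lem:ll'} with $l'=l$; part \ref{it:unimodular} needs a separate construction of a uniform lattice inside $U^{(l)}(F)$.

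\textbf{Part \ref{it:vertextrans}.} Given $x_0,x_0'\in VT$, apply Lemma \ref{lem:ll'} with $l=l'$ and $f_0=1$. This gives $g\in\Aut(T)$ with $gx_0=x_0'$ and $l\circ g=(f_x)\circ l$ for some $(f_x)\in F^{VT}$. Lemma \ref{lem:lcircg} then gives $g\in U^{(l)}(F)$, so $U^{(l)}(F)$ is vertex transitive.

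\textbf{Part \ref{it:local=F}.} Given $x\in VT$ and $f_0\in F$, apply Lemma \ref{lem:ll'} with $l=l'$, $x_0=x_0'=x$ and this $f_0$. We get $g\in U^{(l)}(F)_x$ with $l\circ g=(f_y)\circ l$ and $f_x=f_0$. Restricted to $E(x)$ this reads $l_x\circ g|_{E(x)}=f_0\circ l_x$. Depending on how one reads the composition convention, this says $\sigma_{x,l}(g)=f_0$ or $\sigma_{x,l}(g)=f_0^{-1}$. Either way every element of $F$ is hit, since $F$ is a group.

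\textbf{Part \ref{it:unimodular}.} This is the main work. By Proposition \ref{prop:UFtau} it suffices to build one $\tau$-legal labelling $l$ for which we can exhibit a uniform lattice in $U^{(l)}(F)$; conjugation carries a lattice to a lattice.
\begin{enumerate}
\item Build a finite graph $A$ with a labelling of its oriented edges. Take two vertices $u,v$ when needed; the simplest version is a single vertex with loops, but parity issues can arise. At each vertex the labels must biject onto $\{1,\dots,n\}$, and $l(\bar e)\in\Omega_{\tau(i)}$ whenever $l(e)\in\Omega_i$.
\item For a fixed point $i=\tau(i)$, pair the labels of $\Omega_i$ at $u$ with those of $\Omega_i$ at $v$ by edges between $u$ and $v$ (a bipartite double avoids odd-size problems). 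For a pair $i\neq\tau(i)$, use a bijection $\Omega_i\to\Omega_{\tau(i)}$, which exists because $|\Omega_i|=|\Omega_{\tau(i)}|$. Add edges from $u$ to $v$ with labels in $\Omega_i$ at $u$ and $\Omega_{\tau(i)}$ at $v$, and symmetrically with roles swapped.
\item The result is a finite $n$-regular graph $A$ with a labelling satisfying the $\tau$-legal conditions locally. Its universal cover is $T$, and the labelling lifts to a $\tau$-legal labelling $l$ of $T$.
\item The deck group $\pi_1(A)$ acts freely and cocompactly on $T$ and preserves $l$ exactly. So for each deck transformation $g$ we have $l_{gx}\circ g\circ l_x^{-1}=\mathrm{id}\in F$, and hence $\pi_1(A)<U^{(l)}(F)$ is a uniform lattice.
\end{enumerate}

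The main obstacle is the bookkeeping in step 2. Cardinality equality is needed exactly for the pairs $i\neq\tau(i)$, and the two-vertex bipartite construction sidesteps any parity constraint for orbits fixed by $\tau$.
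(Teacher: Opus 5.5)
Your proposal is correct and follows the paper's proof. Parts (1) and (2) use Lemma \ref{lem:ll'} with $l'=l$, plus Lemma \ref{lem:lcircg}. Part (3) uses a $\tau$-legal labelling on the two-vertex graph with $n$ parallel edges, lifted to $T$, with the deck group transported via Proposition \ref{prop:UFtau}; your step 2 simply writes out the labelling whose existence the paper asserts. In part (2) there is no convention ambiguity: restricting $l\circ g=(f_y)\circ l$ to $E(x)$ gives exactly $\sigma_{x,l}(g)=l_x\circ g\circ l_x^{-1}=f_x=f_0$.
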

\begin{proof}
	\begin{enumerate}
		\item Apply Lemma \ref{lem:ll'} with $l=l'$ and $x_0,x_0'$ arbitrary vertices in $T$. This yields $g\in\Aut(T)$ with $gx_0=x_0'$ and $l\circ g=(f_x)\circ l$ for some $(f_x)\in F^{VT}$. Then by Lemma \ref{lem:lcircg} we have $g\in U^{(l)}(F)$.
		\item Apply Lemma \ref{lem:ll'} with $l=l'$ and $x_0=x'_0=x$ and let $f_0$ range over all elements of $F$.
		\item Let $X$ be the graph formed from two vertices $x_1,x_2$ and $n$ geometric edges, each joining $x_1$ with $x_2$.
		As $|\Omega_i|=|\Omega_{\tau(i)}|$ for all $i\in I$, one can endow $X$ with a $\tau$-legal labelling (noting that Definition \ref{defn:taulegal} makes sense for $n$-regular graphs as well as $n$-regular trees). Taking a covering map $T\to X$, we can then lift the labelling on $X$ to a $\tau$-legal labelling $l'$ on $T$. The deck group of the covering $T\to X$ will then be a uniform lattice in $U^{(l')}(F)$. By Proposition \ref{prop:UFtau}, $U^{(l')}(F)$ is conjugate to $U^{(l)}(F)$ in $\Aut(T)$, hence $U^{(l)}(F)$ contains a uniform lattice too.\qedhere
	\end{enumerate}	
\end{proof}

\begin{remk}
	The condition \ref{it:bare} from Definition \ref{defn:taulegal} -- namely that if $l(e)\in\Omega_i$ then $l(\bar{e})\in\Omega_{\tau(i)}$ -- is essential in the proof of Lemma \ref{lem:ll'}. In fact Lemma \ref{lem:ll'} and Propositions \ref{prop:UFtau} and \ref{prop:local=F} are false in general without this condition.
	Indeed, if a labelling $l$ satisfied Definition \ref{defn:taulegal}\ref{it:l_x} everywhere and satisfied Definition \ref{defn:taulegal}\ref{it:bare} for all edges apart from a single pair $e,\bar{e}$, then every automorphism in $U^{(l)}(F)$ would have to stabilise the geometric edge $\{e,\bar{e}\}$. From this one can construct counter-examples to Lemma \ref{lem:ll'} and Propositions \ref{prop:UFtau} and \ref{prop:local=F}.
\end{remk}

We now turn to the construction of the examples that will lead to Theorems \ref{thm:fintree} and \ref{thm:finquotients} from the introduction. We begin with the following lemma.

\begin{lem}\label{lem:compfactors}
	Let $\sqcup_{i\in I}\Omega_i=\Omega_1\sqcup\Omega_2\sqcup\dots\sqcup\Omega_m$.
	Suppose $F=F_1\times F_2$, where $F_1$ (resp. $F_2$) acts simply transitively on $\Omega_1$ (resp. $\Omega_2$) and trivially on $\Omega_2$ (resp. $\Omega_1$). Suppose $\tau(1)=2$.
	(The action of $F$ on the other $F$-orbits $\Omega_3,\dots,\Omega_m$ can be anything, and the restriction of $\tau$ to $\{3,\dots,m\}$ can be any involution.)
	Fix a $\tau$-legal labelling $l$ of $T$.
	Let $\G<U^{(l)}(F)$ be a uniform lattice.
	\begin{enumerate}
		\item\label{it:Omega2trans} Suppose that $\G$ acts vertex transitively on $T$, and suppose for each $x\in VT$ that $\G_x$ acts transitively on the part of $E(x)$ labelled by $\Omega_1$. Then for each $x\in VT$ we have that $\G_x$ also acts transitively on the part of $E(x)$ labelled by $\Omega_2$.
		\item\label{it:compfactors} Suppose that for each $x\in VT$ the stabiliser $\G_x$ acts transitively on the part of $E(x)$ labelled by $\Omega_1$ and on the part of $E(x)$ labelled by $\Omega_2$. Then the groups $F_1$ and $F_2$ have the same composition factors (with the same multiplicities).
	\end{enumerate}
\end{lem}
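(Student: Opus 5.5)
The approach is to compare orders and composition factors of vertex and edge stabilisers along edges whose labels lie in $\Omega_1$ at the origin. For $x\in VT$ and $j=1,2$, let $E_j(x)\subseteq E(x)$ be the set of edges at $x$ with label in $\Omega_j$. Since $\tau(1)=2$, Definition \ref{defn:taulegal}\ref{it:bare} shows that $e\mapsto\bar e$ is a bijection from $\bigsqcup_x E_1(x)$ onto $\bigsqcup_x E_2(x)$.

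For \ref{it:Omega2trans}, the plan is a direct orbit argument. Let $e,e'\in E_1(x),E_1(x')$. Vertex transitivity gives $g\in\G$ with $gx=x'$, and $ge\in E_1(x')$ because elements of $U^{(l)}(F)$ preserve the $F$-orbit of labels. Transitivity of $\G_{x'}$ on $E_1(x')$ then moves $ge$ to $e'$. So $\G$ is transitive on all $\Omega_1$-labelled edges, and hence, via $e\mapsto\bar e$, on all $\Omega_2$-labelled edges. Now take $e,e'\in E_2(x)$ and $g\in\G$ with $ge=e'$. Then $gx=g\,o(e)=o(e')=x$, so $g\in\G_x$. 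This proves that $\G_x$ is transitive on $E_2(x)$.

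For \ref{it:compfactors}, the key local claim is the following: if $e\in E_1(x)$ and $y=t(e)$, then $\G_e$ is normal in both $\G_x$ and $\G_y$, with $\G_x/\G_e\cong F_1$ and $\G_y/\G_e\cong F_2$.
\begin{itemize}
\item Consider the homomorphism $\pi_1\circ\sigma_{x,l}:\G_x\to F_1$, where $\pi_1$ is the projection $F\to F_1$. Only the $F_1$-coordinate acts on $\Omega_1$, so this records the action of $\G_x$ on $E_1(x)$.
\item Its image is a subgroup of $F_1$ that is transitive on $\Omega_1$. Since $F_1$ is regular on $\Omega_1$, the image must be all of $F_1$.
\item An element of $F_1$ fixing one point of $\Omega_1$ is trivial. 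Hence the kernel is exactly the stabiliser of $e$, which is $\G_e$.
\item The same argument at $y$ applies to $\bar e\in E_2(y)$ and uses $\G_{\bar e}=\G_e$. This gives $\G_y/\G_e\cong F_2$.
\end{itemize}
All these groups are finite because $\G$ is a uniform lattice.

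Now write $c(G)$ for the vector in the free abelian group on isomorphism classes of finite simple groups that records composition factors with multiplicity. By Jordan--H\"older, $c(\G_x)=c(\G_e)+c(F_1)$ and $c(\G_y)=c(\G_e)+c(F_2)$. Setting $d=c(F_2)-c(F_1)$, this gives $c(\G_{t(e)})=c(\G_{o(e)})+d$ for every $\Omega_1$-labelled edge $e$.

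The step I expect to be the main obstacle is going from this local relation to $d=0$ without vertex transitivity. I would handle it by a potential argument. Since $\Omega_1\neq\emptyset$, every vertex has an outgoing $\Omega_1$-labelled edge. Starting from any $x_0$, we can therefore build a path $x_0,x_1,x_2,\dots$ in which each step follows an $\Omega_1$-labelled edge from $x_k$, and then $c(\G_{x_k})=c(\G_{x_0})+kd$. On the other hand $c(\G_x)$ depends only on the $\G$-orbit of $x$, because stabilisers of vertices in one orbit are conjugate. There are only finitely many orbits, so $c(\G_{x_k})$ takes finitely many values. This forces $d=0$, which says exactly that $F_1$ and $F_2$ have the same composition factors with the same multiplicities.
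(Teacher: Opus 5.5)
Your proof is correct and takes essentially the same route as the paper. Part (1) moves transitivity on the $\Omega_1$-labelled edges to the $\Omega_2$-labelled edges via $e\mapsto\bar e$. Part (2) identifies $\G_e$ as the kernel of both $\G_{o(e)}\to F_1$ and $\G_{t(e)}\to F_2$, then pigeonholes over the finitely many vertex orbits along a ray of $\Omega_1$-labelled edges; your additive bookkeeping $c(\G_{x_k})=c(\G_{x_0})+kd$ just restates the paper's multiset comparison of two conjugate stabilisers $\G_{x_i},\G_{x_j}$.
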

\begin{proof}
	\begin{enumerate}
		\item The assumptions on $\G$ mean that $\G$ acts transitively on the part of $ET$ labelled by $\Omega_1$. But $\tau(1)=2$, so the edges with $\Omega_1$ labels are precisely the inverses of the edges with $\Omega_2$ labels, hence $\G$ also acts transitively on the part of $ET$ labelled by $\Omega_2$.
		It follows that for each $x\in VT$ the stabiliser $\G_x$ acts transitively on the part of $E(x)$ labelled by $\Omega_2$.
		
		\item Let $e_1,e_2,e_3,\dots$ be the edges of an infinite oriented embedded path in $T$ such that $l(e_i)\in\Omega_1$ for all $i$. Say $o(e_i)=x_i$ and $t(e_i)=x_{i+1}$ for each $i$.
		We now consider the $\G$-stabiliser of one of the edges $e_i$.
		Since $\tau(1)=2$, we have $l(\bar{e}_i)\in\Omega_2$.
		The map $\sigma_{x_i,l}:U^{(l)}(F)_{x_i}\to F$ from Definition \ref{defn:unigroup} restricts to a map $\G_{x_i}\to F=F_1\times F_2$.
		The assumption that $\G_{x_i}$ acts transitively on the part of $E(x_i)$ labelled by $\Omega_1$ means that $\G_{x_i}$ surjects to the $F_1$ factor.
		Because $F_1$ (reps. $F_2$) acts simply transitively (resp. trivially) on $\Omega_1$, and since $l(e_i)\in\Omega_1$, we see that $\G_{e_i}$ is the kernel of the map $\G_{x_i}\to F_1$.
		A similar argument shows that $\G_{e_i}=\G_{\bar{e}_i}$ is the kernel of a similar map $\G_{x_{i+1}}\to F_2$.
		Let $\mathfrak{C}(G)$ denote the multiset of composition factors of a finite group $G$ (considered as a multiset of isomorphism classes of groups).
		The above observations imply that
		\begin{equation}\label{compfactors}
\mathfrak{C}(\G_{x_i})=\mathfrak{C}(\G_{e_i})\sqcup\mathfrak{C}(F_1)\quad\text{and}\quad \mathfrak{C}(\G_{x_{i+1}})=\mathfrak{C}(\G_{e_i})\sqcup\mathfrak{C}(F_2).
		\end{equation} 
		Since there are finitely many $\G$-orbits of vertices in $T$, we must have $\G_{x_i}$ conjugate to $\G_{x_j}$ for some $i<j$. In particular, $\mathfrak{C}(\G_{x_i})=\mathfrak{C}(\G_{x_j})$. Applying (\ref{compfactors}) to the edges $e_i,e_{i+1},\dots,e_{j-1}$, we deduce that $\mathfrak{C}(F_1)=\mathfrak{C}(F_2)$, as required.\qedhere
	\end{enumerate}
\end{proof}

\begin{exmp}\label{exmp:fewerorbits}
	Let $n=120$. Say $\{1,\dots,120\}=\Omega_1\sqcup\Omega_2=\{1,\dots,60\}\sqcup\{61,\dots,120\}$.
	Let $F=A_5\times C_{60}$, with $A_5$ (resp. $C_{60}$) acting simply transitively on $\Omega_1$ (resp. $\Omega_2$) and trivially on $\Omega_2$ (resp. $\Omega_1$). Let $\tau=(12)\in S_2$, and let $l$ be a $\tau$-legal labelling of $T=T_{120}$.
	We have $|\Omega_1|=|\Omega_2|=60$, so $U^{(l)}(F)$ contains a uniform lattice by Proposition \ref{prop:local=F}\ref{it:unimodular}.
	On the other hand, the groups $A_5$ and $C_{60}$ have different composition factors (indeed $A_5$ is simple while $C_{60}$ has composition factors $C_2,C_2,C_3,C_5$), so	by Lemma \ref{lem:compfactors}\ref{it:compfactors} we see that there is no uniform lattice $\G< U^{(l)}(F)$ with the property that the map $\sigma_{x,l}:\G_x\to F$ (from Definition \ref{defn:unigroup}) is a surjection for every $x\in VT$.
	Additionally, Lemma \ref{lem:compfactors}\ref{it:compfactors} implies that no uniform lattice in $U^{(l)}(F)$ acts transitively on the geometric edges of $T$. Hence every uniform lattice in $U^{(l)}(F)$ has strictly fewer orbits of geometric edges than $U^{(l)}(F)$ itself.
\end{exmp}

\begin{exmp}\label{exmp:nooverlattice}
	Let $n=240$. Say $\{1,\dots,240\}=\Omega_1\sqcup\Omega_2\sqcup\Omega_3\sqcup\Omega_4$, with
	\begin{align*}
		\Omega_1&=\{1,\dots,60\}\\
		\Omega_2&=\{61,\dots,120\}\\
		\Omega_3&=\{121,\dots,180\}\\
		\Omega_4&=\{181,\dots,240\}.
	\end{align*}
	Let $F=A_5\times C_{60}$, with $A_5$ (resp. $C_{60}$) acting simply transitively on $\Omega_1,\Omega_3$ and $\Omega_4$ (resp. $\Omega_2$) and trivially on $\Omega_2$ (resp. $\Omega_1,\Omega_3$ and $\Omega_4$). Let $\tau=(12)(34)\in S_4$ and let $T=T_{240}$.
	Furthermore, suppose that the map $\Omega_3\to\Omega_4$ given by  +60 is $A_5$-equivariant.
	
	Let $X,X'$ be the labelled graphs shown in Figure \ref{fig:XX'}. Let $\rho:T\to X$ and $\rho':T\to X'$ be covering maps, and let $\G,\G'<\Aut(T)$ be the corresponding deck groups. The labellings on $X$ and $X'$ lift to $\tau$-legal labellings $l$ and $l'$ on $T$.
	Observe that $\G<U^{(l)}(F)$ and $\G'<U^{(l')}(F)$.
	
	We may modify $\rho'$ by composing it with an automorphism $g\in\Aut(T)$, hence also modifying $l'$ and $\G'$.
	By Lemma \ref{lem:ll'}, we may choose this modification so that $l'=(f_x)\circ l$ for some $(f_x)\in F^{VT}$.
	Then by Lemma \ref{lem:Ul=Ul'}, we see that $\G$ and $\G'$ are both uniform lattices in the same universal group $U^{(l)}(F)=U^{(l')}(F)$.
\end{exmp}

\begin{figure}[H]
		\begin{tikzpicture}[auto,node distance=2cm,
			thick,every node/.style={circle,draw,font=\small},
			every loop/.style={min distance=2cm},
			hull/.style={draw=none},
			triangle/.style = {regular polygon, regular polygon sides=3, rotate=-90,fill,scale=0.5},
			]
			\tikzstyle{label}=[draw=none,font=\normalsize]
		\node[fill] at (0,0) (X){};		
		
		\draw[fill=none](0,-1) circle (1.0){};
		\draw[fill=none](0,-1.4) circle (1.4){};
		\draw[fill=none](0,-2) circle (2.0){};
		\path (0,-3) edge [dashed] (0,-3.8);
		\node[label] at (-.9,-1.9){121};
		\node[label] at (.9,-1.9){181};
		\node[label] at (-1.5,-2.3){122};
		\node[label] at (1.5,-2.3){182};
		\node[label] at (-2.16,-2.84){180};
		\node[label] at (2.16,-2.84){240};
		
		\draw[fill=none](0,1) circle (1.0){};
		\draw[fill=none](0,1.4) circle (1.4){};
		\draw[fill=none](0,2) circle (2.0){};
		\path (0,3) edge [dashed] (0,3.8);
		\node[label] at (-.9,1.9){1};
		\node[label] at (.9,1.9){61};
		\node[label] at (-1.5,2.3){2};
		\node[label] at (1.5,2.3){62};
		\node[label] at (-2.16,2.84){60};
		\node[label] at (2.16,2.84){120};
		
		\node[label,font=\huge] at (0,-5) {$X$};
		
		\node[triangle] at (0,2){};
		\node[label,font=\large] at (0,1.7) {$e$};
		
		\begin{scope}[shift={(10,0)}]
			\node[fill] at (0,0) (X){};		
			
			\draw[fill=none](0,-1) circle (1.0){};
			\draw[fill=none](0,-1.4) circle (1.4){};
			\draw[fill=none](0,-2) circle (2.0){};
			\path (0,-3) edge [dashed] (0,-3.8);
			\node[label] at (-.9,-1.9){121};
			\node[label] at (.9,-1.9){182};
			\node[label] at (-1.5,-2.3){122};
			\node[label] at (1.5,-2.3){183};
			\node[label] at (-2.16,-2.84){180};
			\node[label] at (2.16,-2.84){181};
			
			\draw[fill=none](0,1) circle (1.0){};
			\draw[fill=none](0,1.4) circle (1.4){};
			\draw[fill=none](0,2) circle (2.0){};
			\path (0,3) edge [dashed] (0,3.8);
			\node[label] at (-.9,1.9){1};
			\node[label] at (.9,1.9){61};
			\node[label] at (-1.5,2.3){2};
			\node[label] at (1.5,2.3){62};
			\node[label] at (-2.16,2.84){60};
			\node[label] at (2.16,2.84){120};
			
			\node[label,font=\huge] at (0,-5) {$X'$};
		\end{scope}
		\end{tikzpicture}
	\caption{\small The labelled graphs $X$ and $X'$ used in Example \ref{exmp:nooverlattice}. Note that $X'$ is obtained from $X$ by cyclically permuting the labels $181,182,\dots,240$. Note that each label is shown near the origin end of the edge, for example the edge $e$ in $X$ has label 1 while the inverse edge $\bar{e}$ has label 61.}\label{fig:XX'}
\end{figure}
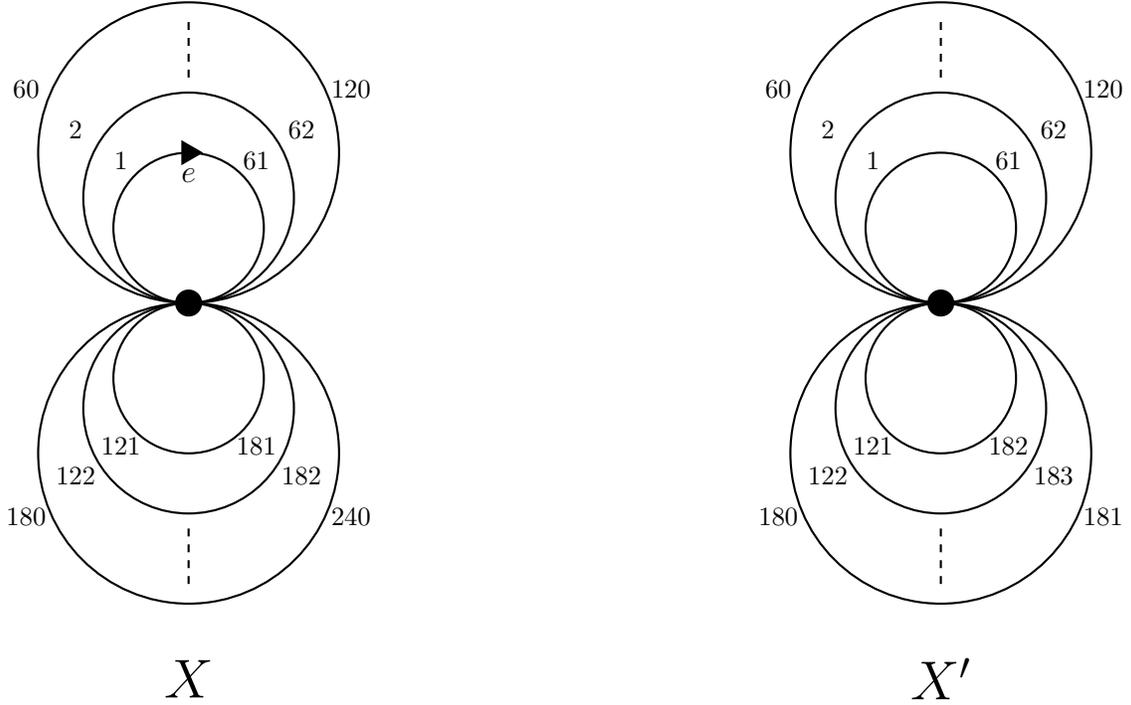

Using Example \ref{exmp:nooverlattice}, we now prove the following theorem, which will be used to deduce Theorem \ref{thm:fintree} in Section \ref{sec:fins}.

\begin{thm}\label{thm:univ}
	There exists a $\tau$-legal labelling $l$ on a tree $T$, a universal group $U^{(l)}(F)$, and free uniform lattices $\G,\G'<U^{(l)}(F)$ such that no uniform lattice $\La<U^{(l)}(F)$ contains $U^{(l)}(F)$-conjugates of both $\G$ and $\G'$.
\end{thm}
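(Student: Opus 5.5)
The plan is to use the lattices $\G,\G'$ of Example~\ref{exmp:nooverlattice} and argue by contradiction with Lemma~\ref{lem:compfactors}. Recall that $F=A_5\times C_{60}$, $\tau=(12)(34)$, and $\G,\G'$ are deck groups of the one-vertex graphs $X,X'$. These deck groups are free, since $\pi_1$ of a one-vertex graph is free and the action on $T$ is free. They act simply transitively on $VT$, and by the modification in Example~\ref{exmp:nooverlattice} both lie in $U:=U^{(l)}(F)=U^{(l')}(F)$. Suppose $\La<U$ is a uniform lattice containing $u\G u^{-1}$ and $v\G'v^{-1}$ for some $u,v\in U$. Replacing $\La$ by $u^{-1}\La u$, we may assume $\G<\La$ and $h\G'h^{-1}<\La$ with $h=u^{-1}v\in U$. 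Since $\G<\La$, the lattice $\La$ is vertex transitive. The goal is to show that each $\La_x$ acts transitively on the $\Omega_1$-labelled part of $E(x)$. Lemma~\ref{lem:compfactors}\ref{it:Omega2trans} then gives transitivity on the $\Omega_2$-part as well, and Lemma~\ref{lem:compfactors}\ref{it:compfactors} forces $\mathfrak{C}(A_5)=\mathfrak{C}(C_{60})$, which is false.

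To get elements of $\La_x$, I use the ``holonomy'' of elements between vertices. For $\lambda\in U$ with $\lambda x=y$, put $\phi(\lambda)=l_y\circ\lambda\circ l_x^{-1}\in F$. This is multiplicative along compositions. Every $\gamma\in\G$ preserves $l$, so $\phi(\gamma)=1$. Each $\gamma'\in\G'$ preserves $l'=(f_x)\circ l$, so $\phi(\gamma')=f_{\gamma'x}^{-1}f_x$.

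Now fix $x$ and an edge $e\in E(x)$ with $l(e)=120+j\in\Omega_3$, and let $y=t(e)$. There is a unique $\gamma\in\G$ with $\gamma x=y$, and it carries $\bar e$ to an edge labelled $180+j$ in the appropriate position. Likewise there is a unique $\gamma'\in h\G'h^{-1}$ with $\gamma'x=y$. Its holonomy is a conjugate of trivial holonomy, twisted by the $F$-valued data coming from $h$ and $(f_x)$. The important feature is that along the $\Omega_3$-edges, $\G'$ pairs $120+j$ with $181+j$ rather than $180+j$.

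The element $\gamma^{-1}\gamma'$ lies in $\La_x$, and its holonomy is $(a_j,c_j)\in A_5\times C_{60}$. I will compute the $A_5$-component by comparing how $\gamma$ and $\gamma'$ treat the pair of labels $(120+j,\,\cdot\,)$. Because $A_5$ acts on $\Omega_3$ and $\Omega_4$ compatibly via $+60$, a single $a\in A_5$ must move the $\Omega_3$-label and the $\Omega_4$-label in lockstep. The off-by-one cyclic shift $180+j\mapsto 181+j$ therefore shows up as a discrepancy that the $A_5$-components must absorb. Identifying $\Omega_3$ with $A_5$ via the regular action, I expect this to show that the $a_j$, possibly after multiplying together several such loops, realise nontrivial elements relating the element of $A_5$ indexed by $j$ to the one indexed by $j+1$. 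That is, the $A_5$-parts of $\phi(\La_x)$ contain the ``transition'' elements $g_j^{-1}g_{j+1}$ for an enumeration $g_1,\dots,g_{60}$ of $A_5$, taken up to conjugation.

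These transition elements generate all of $A_5$, because the products telescope to every $g_1^{-1}g_k$. Hence the projection of $\phi(\La_x)$ to $A_5$ is onto. Since $A_5$ acts simply transitively on $\Omega_1$, this makes $\La_x$ transitive on the $\Omega_1$-labelled edges at $x$, and the contradiction above follows.

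The main obstacle is the holonomy bookkeeping in the previous two paragraphs: controlling the unknown twist coming from $h$ and from $(f_x)$, which a priori conjugates things by arbitrary elements of $F$. I would first normalise using Lemma~\ref{lem:Ul=Ul'} and Lemma~\ref{lem:lcircg} so that $h\G'h^{-1}$ preserves a labelling $(f'_x)\circ l$. I would then work entirely in the quotient graph $\La\backslash T$, which has one vertex. Its loops carry $F$-cosets as labels, and the vertex group's image in $F$ must accommodate both $X$'s pairing and $X'$'s pairing simultaneously. A direct comparison of the two pairings on the $\Omega_3/\Omega_4$ loops should then force the vertex group's image to contain enough of $A_5$.
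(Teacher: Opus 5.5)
\paragraph{There is a genuine gap: the decisive step is not proved.} Your skeleton is the paper's. You reduce to $\G<\La$ and $h\G'h^{-1}<\La$ with $h\in U^{(l)}(F)$, aim to show each $\La_x$ is transitive on the $\Omega_1$-labelled edges at $x$, and conclude with Lemma~\ref{lem:compfactors}. Your elements $\gamma^{-1}\gamma'\in\La_x$ are exactly the inverses of the elements the paper uses. But the step that carries all the content, namely that these elements force $\La_x$ to project onto $A_5$, is stated only as an expectation, and you yourself name the unknown twist as the unresolved obstacle.

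The mechanism you sketch also fails as stated. If the $A_5$-components are known only as the transition elements $g_j^{-1}g_{j+1}$ \emph{up to conjugation}, by unknown and possibly different elements of $F$, then products no longer telescope. So nothing forces them to generate $A_5$. The one-vertex quotient $\La\backslash T$ alternative in your last paragraph is likewise only a hope. There is also a small slip: $\gamma$ with $\gamma x=y$ does not carry $\bar e$ to an edge at $x$; $\gamma^{-1}$ does.

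\paragraph{The missing idea.} Do all the bookkeeping at the single vertex $x$, in the single labelling $l''=l'\circ h^{-1}$ preserved by $h\G'h^{-1}$, and track one edge rather than holonomies. By Lemmas~\ref{lem:Ul=Ul'} and~\ref{lem:lcircg}, $l=(f'_y)\circ l''$ for some $(f'_y)\in F^{VT}$.

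Let $e\in E(x)$ have $l''(e)=120+k$.
\begin{itemize}
\item Since $l''$ lifts the labelling of $X'$, we have $l''(\bar e)=181+k$.
\item Since $l$ lifts the labelling of $X$ and $l(e)=f'_x(120+k)$, we have $l(\bar e)=f'_x(120+k)+60=f'_x(180+k)$, using that $+60\colon\Omega_3\to\Omega_4$ is $F$-equivariant.
\item So $\bar e$ has the same $l$-label as the edge $e'\in E(x)$ with $l''(e')=180+k$.
\end{itemize}
Hence the element of $\G$ taking $t(e)$ to $x$ sends $\bar e$ to $e'$. The element of $h\G'h^{-1}$ taking $t(e)$ to $x$ sends $\bar e$ to the edge with $l''$-label $181+k$. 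Their quotient is an element of $\La_x$ moving $l''$-label $180+k$ to $181+k$.

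The twist at $t(e)$ never enters, and the twist $f'_x$ at $x$ cancels by equivariance. Composing over $k$ shows that $\La_x$ is transitive on the $\Omega_4$-edges at $x$. Since $A_5$ acts regularly on $\Omega_4$ and $C_{60}$ acts trivially, $\sigma_{x,l''}(\La_x)$ projects onto $A_5$. This gives transitivity on the $\Omega_1$-edges, and Lemma~\ref{lem:compfactors} then finishes the argument as you intended.
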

\begin{proof}
	Let $\G,\G'<U^{(l)}(F)=U^{(l')}(F)$ be the lattices constructed in Example \ref{exmp:nooverlattice}.
	It suffices to show that no uniform lattice $\La<U^{(l)}(F)$ contains both $\G$ and a conjugate of $\G'$.
	Suppose the contrary, that $\La<U^{(l)}(F)$ is a uniform lattice containing both $\G$ and $\G''=g^{-1}\G'g$, with $g\in U^{(l)}(F)$.
	
	Recall from Example \ref{exmp:nooverlattice} that $\G,\G'$ are the deck groups of covering maps $\rho:T\to X$ and $\rho':T\to X'$.
	It follows that $\G''$ is the deck group of the covering map $\rho''=\rho'\circ g:T\to X'$.
	Moreover, the labelling on $X'$ lifts via $\rho''$ to a labelling $l''=l'\circ g$ on $T$.
	By Lemma \ref{lem:lcircg}, we have $l''=(\bar{f}_x)\circ l'$ for some $(\bar{f}_x)\in F^{VT}$.
	But from Example \ref{exmp:nooverlattice} we also have $l'=(f_x)\circ l$ with $(f_x)\in F^{VT}$.
	Therefore $l=(f'_x)\circ l''$ with $(f'_x)=(f_x^{-1}\bar{f}_x^{-1})\in F^{VT}$.
	
	Pick a vertex $x\in VT$. Let $e_{121},e_{181},e_{182}\in E(x)$ have $l''$-labels 121, 181, 182 respectively. Since $l''$ is a lift of the labelling on $X'$, we have $l''(\bar{e}_{121})=182$. Hence there is a deck transformation $\gamma''\in\G''$ with $\gamma''(\bar{e}_{121})=e_{182}$ (see Figure \ref{fig:3edges}). 
	Now consider the $l$-labels of these edges.
	We have $l=(f'_x)\circ l''$, therefore the edges $e_{121},e_{181},e_{182}$ have $l$-labels $f'_x(121), f'_x(181), f'_x(182)$ respectively.
	Since $l$ is a lift of the labelling on $X$, we have $l(\bar{e}_{121})=f'_x(121)+60$. Moreover, we assumed in Example \ref{exmp:nooverlattice} that the map $\Omega_3\to\Omega_4$ given by +60 is $A_5$- and hence $F$-equivariant, so in fact we have $l(\bar{e}_{121})=f'_x(181)=l(e_{181})$.
	It follows that there is a deck transformation $\gamma\in\G$ with $\gamma(\bar{e}_{121})=e_{181}$ (again see Figure \ref{fig:3edges}).
	Then the composition $\lambda=\gamma''\gamma^{-1}\in\La$ satisfies $\lambda(e_{181})=e_{182}$ (and $\lambda(x)=x$).
	
	A similar argument to above shows that for each $i\in\{181,182,\dots,239\}$ there is an element of $\La$ that fixes $x$ and maps $e_i$ to $e_{i+1}$, where each $e_i$ is the edge in $E(x)$ with $l''(e_i)=i$.
	It follows that the stabiliser $\La_x$ acts transitively on the edges in $E(x)$ with $l''$-labels in $\Omega_4$.
	Recall that the $A_5$ factor in $F=A_5\times C_{60}$ acts simply transitively on both $\Omega_1$ and $\Omega_4$, while the $C_{60}$ factor acts trivially on $\Omega_1$ and $\Omega_4$.
	Since $\La<U^{(l)}(F)=U^{(l'')}(F)$ (Lemma \ref{lem:Ul=Ul'}), we deduce that the map $\sigma_{x,l''}:\La_x\to F$ (given by Definition \ref{defn:unigroup}) surjects to the $A_5$ factor of $F$. Therefore $\La_x$ acts transitively on the edges in $E(x)$ with $l''$-labels in $\Omega_1$.
	
	This whole argument works for any $x\in VT$, and moreover $\La$ acts vertex transitively on $T$ since it contains $\G$ and $\G'$.
	We can then apply Lemma \ref{lem:compfactors} to deduce that the factors of $F=A_5\times C_{60}$ have the same composition factors, a contradiction.
\end{proof}

\begin{figure}[H]
	\centering
	\begin{tikzpicture}[auto,node distance=2cm,
		thick,every node/.style={circle,draw,font=\small},
		every loop/.style={min distance=2cm},
		hull/.style={draw=none},
		]
		\tikzstyle{label}=[draw=none,font=\normalsize]
		\node[fill] at (0,0) (X){};
		
		\path (0,0) edge [black,postaction={decoration={markings,mark=at position 0.6 with {\arrow[black,line width=1mm, scale=0.7]{triangle 60}}},decorate}] (0,-3);
		\path (0,0) edge [black,postaction={decoration={markings,mark=at position 0.6 with {\arrow[black,line width=1mm, scale=0.7]{triangle 60}}},decorate}] ++(130:3);	
		\path (0,0) edge [black,postaction={decoration={markings,mark=at position 0.6 with {\arrow[black,line width=1mm, scale=0.7]{triangle 60}}},decorate}] ++(50:3);	
		
		\node[label,font=\large] at (.6,-1.6){$e_{121}$};
		\node[label,font=\large] at (1.6,1.1){$e_{182}$};
		\node[label,font=\large] at (-1.6,1.1){$e_{181}$};
		
		\node[label,font=\large] at (.4,0){$x$};
		
		\node[label,font=\normalsize] at (2.3,.6){$l=f'_x(182), l''=182$};
		\node[label,font=\normalsize] at (-2.2,.6){$l=f'_x(181), l''=181$};
		\node[label,font=\normalsize] at (1.7,-.8){$l=f'_x(121), l''=121$};
		\node[label,font=\normalsize] at (1.7,-2.3){$l=f'_x(181), l''=182$};
		
		\path (-2.6,-3) edge [red,bend right=20,postaction={decoration={markings,mark=at position 0.6 with {\arrow[red,line width=1mm, scale=0.7]{triangle 60}}},decorate}] (-4.6,1.2);	
		\path (3.3,-3) edge [red,bend left=20,postaction={decoration={markings,mark=at position 0.6 with {\arrow[red,line width=1mm, scale=0.7]{triangle 60}}},decorate}] (5.3,1.2);	
		\node[label,red,font=\large] at (-3.7,-.8) {$\gamma$};
		\node[label,red,font=\large] at (4.4,-.8) {$\gamma''$};
	
	\end{tikzpicture}
	\caption{\small The $l$ and $l''$-labellings on the edges $e_{121},\bar{e}_{121},e_{181},e_{182}$, as in the proof of Theorem \ref{thm:univ} (note that the labellings on $e_{121}$ are shown near the origin of the edge -- in this case near the vertex $x$ -- while the labellings on $\bar{e}_{121}$ are shown near the terminus of $e_{121}$).}\label{fig:3edges}
\end{figure}
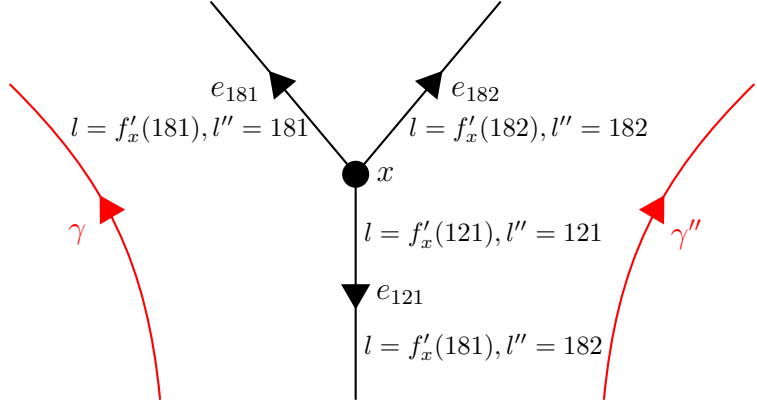

We now turn to Theorem \ref{thm:legaluniv}, regarding the case where $l$ is a legal labelling. This relies on the following example of a uniform lattice in $U^{(l)}(F)$.

\begin{exmp}\label{exmp:LaPsi}
	Let $n$ and $F<S_n$ be arbitrary. Let $l$ be a legal labelling on $T=T_n$ (recall this means that $l(e)=l(\bar{e})$ for all $e\in ET$). Let
	\begin{align}
		\La^l(F)&=\{g\in \Aut(T)\mid\exists f\in F,\,\forall e\in ET:\, l(ge)=fl(e)\},\label{falle}\\
		&=\{g\in \Aut(T)\mid\exists f\in F,\,\forall x\in VT:\, l_{gx}\circ g\circ l_x^{-1}=f\}.\label{fallx}
	\end{align}
It follows from (\ref{fallx}) that $\La^l(F)$ is a subgroup of $U^{(l)}(F)$.
And $\La^l(F)$ is vertex transitive since the group of automorphisms of $T$ that preserve $l$ is vertex transitive -- see \cite[\S3.2]{BurgerMozes00b} or \cite[Lemma 4.3]{GarridoGlasnerTornier18}.
Let $\Psi:\Lambda^l\to F$ be the map that associates to each $g\in\Lambda^l$ the element $f\in F$ from (\ref{fallx}) (or equivalently from (\ref{falle})).
It is easy to check that $\Psi$ is a homomorphism.
\end{exmp}

\begin{lem}\label{lem:Psi}
	The map $\Psi:\La^l(F)\to F$ from Example \ref{exmp:LaPsi} restricts to an isomorphism $\La^l(F)_x\to F$ for any $x\in VT$.
	In particular, $\La^l(F)$ is a uniform lattice in $U^{(l)}(F)$.
\end{lem}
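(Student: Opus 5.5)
Fix $x\in VT$ and write $\Psi_x$ for the restriction of $\Psi$ to $\La^l(F)_x$. The plan is to show $\Psi_x$ is injective and surjective, and then deduce the lattice statement from vertex transitivity.

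\textbf{Injectivity.} Suppose $g\in\La^l(F)_x$ with $\Psi(g)=1$. Then by (\ref{falle}) we have $l(ge)=l(e)$ for all $e\in ET$. I will show by induction on $k$ that $g$ fixes the $k$-ball about $x$ pointwise. The base case is $gx=x$. For the inductive step, let $y$ be a vertex fixed by $g$. Since $l_y$ is a bijection $E(y)\to\{1,\dots,n\}$ and $l(ge)=l(e)$ with $ge\in E(y)$ for $e\in E(y)$, $g$ fixes every edge of $E(y)$. Hence $g$ fixes every neighbour of $y$. It follows that $g=1$.

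\textbf{Surjectivity.} This is the main step. Given $f\in F$, I will construct $g$ inductively on balls about $x$, in the same spirit as the proof of Lemma \ref{lem:ll'}. Set $gx=x$ and define $g|_{E(x)}$ by $l_x\circ g|_{E(x)}=f\circ l_x$. Now suppose $g$ is defined on the $k$-ball, with $l(ge)=fl(e)$ for every edge defined so far. Let $y$ be at distance $k$ from $x$, let $e\in E(y)$ point towards $x$, and put $y'=gy$. The edge $ge\in E(y')$ must satisfy $l(ge)=fl(e)$. This is where legality is needed: we know $l(g\bar e)=fl(\bar e)$ by induction, and since $l(e)=l(\bar e)$ and $l(ge)=l(\overline{ge})=l(g\bar e)$, the required label $fl(e)$ is consistent with where $e$ is already sent. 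We then define $g|_{E(y)}$ by $l_{y'}\circ g|_{E(y)}=f\circ l_y$, which agrees with the existing value on $e$. This extends $g$ to an automorphism with $l(ge)=fl(e)$ for all $e$, so $g\in\La^l(F)_x$ and $\Psi(g)=f$. This consistency check is the only delicate point; it is exactly the case $\tau$ trivial of Lemma \ref{lem:ll'}, but with a constant family $f_y=f$, which is possible only because $l(e)=l(\bar e)$.

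\textbf{Lattice statement.} Each vertex stabiliser $\La^l(F)_x\cong F$ is finite, so the action is proper. Example \ref{exmp:LaPsi} records that $\La^l(F)$ is vertex transitive, so there is one orbit of vertices and the action is cocompact. Together with $\La^l(F)<U^{(l)}(F)$, this shows that $\La^l(F)$ is a uniform lattice in $U^{(l)}(F)$.
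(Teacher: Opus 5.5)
Your proposal is correct and follows essentially the same route as the paper. Surjectivity is the same inductive construction on balls about $x$, with $l(ge)=l(g\bar e)=fl(\bar e)=fl(e)$ for the edge pointing back towards $x$. For injectivity, the paper observes that each choice $g|_{E(y)}=l_{gy}^{-1}\circ f\circ l_y$ was forced, which is your kernel argument phrased differently, and your explicit derivation of the lattice statement from finite stabilisers and vertex transitivity spells out what the paper leaves implicit.
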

\begin{proof}
	Let $f\in F$. We now construct $g\in\La^l(F)_x\cap\Psi^{-1}(f)$ inductively by defining it on the $k$-ball about $x$ for each $k=1,2,\dots$, so that at each step $l_{gx}\circ g\circ l_x^{-1}=f$ for all $x\in VT$ in the $(k-1)$-ball about $x$.
	For $k=1$, we define $g$ on the 1-ball about $x$ by setting $g|_{E(x)}=l_x^{-1}\circ f\circ l_x:E(x)\to E(x)$.
	Now suppose we have defined $g$ on the $k$-ball about $x$. Let $y\in VT$ be at distance $k$ from $x$, and let $e\in E(y)$ be the edge pointing towards $x$. By the induction hypothesis combined with the fact that $l$ is a legal labelling, $l(ge)=l(g\bar{e})=fl(\bar{e})=fl(e)$. We then extend $g$ to the rest of $E(y)$ by setting $g|_{E(y)}=l_{gy}^{-1}\circ f\circ l_y:E(y)\to E(gy)$. Doing this for all $y\in VT$ at distance $k$ from $x$ allows us to extend $g$ to the $(k+1)$-ball about $x$ in the required manner.
	
	Finally, note that the choices $g|_{E(y)}$ made in the above construction were the only choices that ensured $l_{gy}\circ g\circ l_y^{-1}=f$ for all $y\in VT$, so the element $g\in\La^l(F)_x\cap\Psi^{-1}(f)$ is unique.
\end{proof}

\begin{lem}\label{lem:Laconj}
	If $l$ is a legal labelling on $T$ and $h\in\Aut(T)$ then $\La^{l\circ h}(F)=h^{-1}\La^l(F)h$.
\end{lem}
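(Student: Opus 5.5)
This is a direct unwinding of the definition (\ref{falle}), so I would prove it by a double inclusion. First I need to know that $l\circ h$ is again a legal labelling, so that $\La^{l\circ h}(F)$ makes sense. This holds because $h$ maps $E(x)$ bijectively onto $E(hx)$. It also commutes with edge inversion, $h\bar e=\overline{he}$, so $(l\circ h)(\bar e)=l(\overline{he})=l(he)=(l\circ h)(e)$.

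Next I would check that $h^{-1}\La^l(F)h\subseteq\La^{l\circ h}(F)$. Take $g\in\La^l(F)$ with associated $f\in F$, so that $l(ge)=fl(e)$ for all $e\in ET$. For any $e\in ET$ I compute
\begin{equation*}
(l\circ h)(h^{-1}gh\,e)=l(g(he))=f\,l(he)=f\,(l\circ h)(e).
\end{equation*}
The same $f$ therefore witnesses $h^{-1}gh\in\La^{l\circ h}(F)$.

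For the reverse inclusion, take $g'\in\La^{l\circ h}(F)$ with associated $f$, so that $l(hg'e)=f\,l(he)$ for all $e$. Substituting $e=h^{-1}e'$ gives $l(hg'h^{-1}e')=f\,l(e')$ for every $e'\in ET$. Hence $hg'h^{-1}\in\La^l(F)$, that is, $g'\in h^{-1}\La^l(F)h$. Alternatively, I could apply the first inclusion to the labelling $l\circ h$ and the automorphism $h^{-1}$, using $(l\circ h)\circ h^{-1}=l$.

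There is no real obstacle here. The only point needing care is that the quantifier $\exists f$ in (\ref{falle}) is uniform over all edges, and the same $f$ carries through the conjugation unchanged. As a byproduct, the argument shows that $\Psi$ is compatible with conjugation.
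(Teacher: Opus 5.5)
Your proposal is correct and follows essentially the same route as the paper: you check that $l\circ h$ is legal, then unwind (\ref{falle}) via the substitution $e\mapsto h^{-1}e$, keeping the same $f\in F$. The only difference is cosmetic: you present the argument as a double inclusion, while the paper writes a single chain of equivalences.
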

\begin{proof}
	First note that $l\circ h$ is a legal labelling ($l(e)=l(\bar{e})$ for all $e\in ET$ clearly implies $(l\circ h)(e)=(l\circ h)(\bar{e})$ for all $e\in ET$), so the group $\La^{l\circ h}(F)$ is well-defined.
For $g\in\Aut(T)$ and $f\in F$, the condition $l(ge)=fl(e)$ for all $e\in ET$ from (\ref{falle}) is equivalent to $(l\circ h)(h^{-1}ghh^{-1}e)=f(l\circ h)(h^{-1}e)$ for all $e\in ET$, which in turn is equivalent to  $(l\circ h)(h^{-1}ghe)=f(l\circ h)(e)$ for all $e\in ET$. It follows that $g\in\La^l(F)$ if and only if $h^{-1}gh\in\La^{l\circ h}(F)$.
\end{proof}

\theoremstyle{plain}
\newtheorem*{thm:legaluniv}{Theorem \ref{thm:legaluniv}}
\begin{thm:legaluniv}
	For any legal labelling $l$ on a regular tree $T$, and any universal group $U^{(l)}(F)$, there exists a uniform lattice $\La<U^{(l)}(F)$ that contains a $U^{(l)}(F)$-conjugate of every free uniform lattice in $U^{(l)}(F)$.
\end{thm:legaluniv}
\begin{proof}
	Fix a legal labelling $l$ on an $n$-regular tree $T$, and fix $F<S_n$.
	Let $\La^l(F)< U^{(l)}(F)$ be the group from Example \ref{exmp:LaPsi}, which is a uniform lattice by Lemma \ref{lem:Psi}.
	We claim that $\La^l(F)$ contains a $U^{(l)}(F)$-conjugate of every free uniform lattice in $U^{(l)}(F)$.
	
	Let $\G<U^{(l)}(F)$ be a free uniform lattice. Since $\G$ acts freely on $T$, it is a free group. Moreover, as $\G$ acts cocompactly on $T$, there is a finite subtree $Y\subset T$ such that the $\G$-translates of $VY$ partition $VT$.
	Furthermore, there is a free basis $\{\gamma_1\dots,\gamma_r\}$ of $\G$ and vertices $x_1,\dots,x_r\in VY$ such that $\gamma_ix_i$ is adjacent to $Y$ for each $i$. Say $e_i\in E(\gamma_ix_i)$ is the edge going from $\gamma_ix_i$ to $Y$.
	Note that every edge in $T$ is either a $\G$-translate of an edge in $Y$ or a $\G$-translate of an edge $e_i$ or $\bar{e}_i$ (indeed these translates of edges form a $\G$-invariant subtree of $T$, which must equal the whole of $T$).
	
	As $\G<U^{(l)}(F)$, there are elements $f_1,\dots,f_r\in F$ such that $l_{\gamma_ix_i}\circ\gamma_i\circ l_{x_i}^{-1}=f_i$ for each $i$.	
	Let $\theta:\G\to F$ be the unique homomorphism with $\theta(\gamma_i)=f_i$ for each $i$.
	Define a labelling $l':ET\to\{1,\dots,n\}$ by setting 
	\begin{equation}\label{l'def}
l'_{h y}=\theta(h)\circ l_y\circ h^{-1}|_{E(h y)},
	\end{equation}
for each $h\in\G$ and $y\in VY$ (this is well-defined since for each $x\in VT$ there are unique $h\in\G$ and $y\in VY$ with $x=hy$).
	
	We claim that $l'$ is $\theta$-equivariant in the sense that $l'_{gx}\circ g|_{E(x)}=\theta(g)\circ l'_x$ for every $g\in\G$ and $x\in VT$ (or equivalently $l'(ge)=\theta(g)l'(e)$ for every $g\in \G$ and $e\in ET$).
	Indeed, given $x\in VT$, we can write $x=hy$ for $h\in \G$ and $y\in VY$.
	Then for $g\in\G$ we have
	\begin{align*}
l'_{gx}\circ g|_{E(x)}&=l'_{ghy}\circ g|_{E(hy)}\\
&=\theta(gh)\circ l_y\circ(gh)^{-1}\circ g|_{E(hy)}\\
&=\theta(g)\circ\theta(h)\circ l_y\circ h^{-1}|_{E(hy)}\\
&=\theta(g)\circ l'_x,
	\end{align*}
which proves the claim.
	
	Next, we claim that $l'$ is a legal labelling of $T$.
	By (\ref{l'def}) we know that each $l'_x:E(x)\to\{1,\dots,n\}$ is a bijection, so it suffices to show that $l'(e)=l'(\bar{e})$ for all $e\in ET$.
	As observed earlier, every edge in $T$ is either a $\G$-translate of an edge in $Y$ or a $\G$-translate of an edge $e_i$ or $\bar{e}_i$.
	By (\ref{l'def}), $l'$ agrees with $l$ for edges in $Y$, so $l'(e)=l'(\bar{e})$ for $e\in EY$ follows from the fact that $l$ is a legal labelling.	
	For an edge $e_i$, we know that $e_i\in E(\gamma_ix_i)$, so (\ref{l'def}) implies that
	\begin{equation}
		l'(e_i)=\theta(\gamma_i)l(\gamma_i^{-1}e_i)=f_il(\gamma_i^{-1}e_i).
	\end{equation}
On the other hand, the definition of $f_i$ says that $l_{\gamma_ix_i}\circ\gamma_i\circ l_{x_i}^{-1}=f_i$, so $l(e_i)=f_il(\gamma_i^{-1}e_i)=l'(e_i)$.
Furthermore, $\bar{e}_i\in E(y)$ for some $y\in VY$, so (\ref{l'def}) implies that $l'(\bar{e}_i)=l(\bar{e}_i)$.
Since $l$ is a legal labelling, we deduce that $l'(e_i)=l(e_i)=l(\bar{e}_i)=l'(\bar{e}_i)$.
Finally, for an edge of the form $he$ with $h\in\G$ and either $e\in EY$ or $e=e_i$, the $\theta$-equivariance of $l'$ implies that $l'(he)=\theta(h)l'(e)=\theta(h)l'(\bar{e})=l'(h\bar{e})$.
Thus $l'$ is a legal labelling.

Thirdly, we claim that $l'=(f_x)\circ l$ for some $(f_x)\in F^{VT}$.
Indeed, composing (\ref{l'def}) with $l_{hy}^{-1}$ yields $l'_{hy}\circ l_{hy}^{-1}=\theta(h)\circ l_y\circ h^{-1}\circ l_{hy}^{-1}$.
Since $h^{-1}\in\G<U^{(l)}(F)$, we know that $l_y\circ h^{-1}\circ l_{hy}^{-1}\in F$, hence $l'_{hy}\circ l_{hy}^{-1}\in F$.
The claim follows since $hy$ ranges over all vertices of $T$.
	
Since $l'$ is a legal labelling, we can form the lattice $\La^{l'}(F)$, and the $\theta$-equivariance of $l'$ implies that $\G<\La^{l'}(F)$ (see (\ref{fallx})). As $l,l'$ are both legal labellings, there is $g\in\Aut(T)$ with $l'=l\circ g$ -- see \cite[\S3.2]{BurgerMozes00b} or \cite[Lemma 4.3]{GarridoGlasnerTornier18}.
But we also have $l'=(f_x)\circ l$ for some $(f_x)\in F^{VT}$, so $g\in U^{(l)}(F)$ by Lemma \ref{lem:lcircg}.
Then $\G<\La^{l'}(F)=\La^{l\circ g}(F)=g^{-1}\La^l(F) g$ by Lemma \ref{lem:Laconj}, so $\La^l(F)$ contains a $U^{(l)}(F)$-conjugate of $\G$, as required.
\end{proof}

\bigskip
\section{Trees with fins}\label{sec:fins}

\begin{defn}(Trees with fins)\label{defn:graphfin}\\
	Let $X$ be a tree.
	Let $\Delta$ be a collection of embeddings of trees $\gamma:Y\to X$.
	A \emph{tree with fins} $\bfX$ is a CAT(0) square complex obtained by taking the mapping cylinder of  $$\bigcup_\Delta \gamma:\bigsqcup_\Delta Y\to X.$$
	As a space,
	$$\bfX=X\sqcup\bigsqcup_\Delta Y\times[0,1]/\sim,$$
	where $(y,0)\sim\gamma(y)$ for any $y\in Y$ and $(\gamma:Y\to X)\in\Delta$.
	The square complex structure comes from the subspaces $e\times[0,1]\subset\bfX$ for edges $e\in EY$ and $(\gamma:Y\to X)\in\Delta$.
	
	Note that there is a natural embedding $X\hookrightarrow{}\bfX$, and we will just write $X$ for the image of this embedding.
	Meanwhile, each subcomplex $Y\times\{1\}\subset\bfX$ is isomorphic to $Y$, and is referred to as a \emph{fin} of $\bfX$.
	For ease of notation we will always write $Y$ instead of $Y \times \{1\}$.
	
	A tree with fins $\bfX$ is \emph{locally} \emph{finite} if it is a locally finite square complex.
\end{defn}

\begin{defn}(Automorphism groups of trees with fins)\label{defn:autofin}\\
	Let $\bfX$ be a locally finite tree with fins.
We define the automorphism group $\Aut(\bfX)$ as the group of all cubical automorphisms of $\bfX$ that preserve the underlying tree $X\subset\bfX$ (and hence also preserve the collection of fins in $\bfX$).
Equipped with the compact-open topology, $\Aut(\bfX)$ is a locally compact group.
In analogy with the tree setting (Definition \ref{defn:unilattice}), we say that $\G<\Aut(\bfX)$ is a \emph{uniform lattice} if $\G$ acts properly and cocompactly on $\bfX$, and we say that $\bfX$ is \emph{uniform} if $\Aut(\bfX)$ contains a uniform lattice.
\end{defn}

\begin{remk}\label{remk:preserveX}
	For many trees with fins $\bfX$ (including all those we work with in this section) every cubical automorphism of $\bfX$ preserves the underlying tree $X\subset\bfX$ -- hence $\Aut(\bfX)$ can also be thought of as simply the group of all cubical automorphisms of $\bfX$.
	More precisely, if $X$ is a tree with no degree 1 vertices and if $\bfX$ has at least two fins, then every cubical automorphism of $\bfX$ preserves $X$. Indeed, in this case $\bfX$ is a CAT(0) square complex, and the collection of hyperplanes of $\bfX$ which bound a shallow halfspace (i.e. where the halfspace is contained in a bounded neighbourhood of the hyperplane) is precisely the collection of hyperplanes of the form $Y\times\{\tfrac{1}{2}\}\subset\bfX$, so this collection of hyperplanes must be preserved by all cubical automorphisms of $\bfX$; since $\bfX$ has at least two fins, it follows that all cubical automorphisms preserve $X$.
\end{remk}

\begin{remk}
	The definition of tree with fins given in \cite{Woodhouse21b} and \cite{ShepherdWoodhouse22} requires that each fin $Y$ is a bi-infinite line. We adopt the more general definition above because it is more convenient for the arguments in this section (and it is still a natural definition).
\end{remk}

The following proposition allows us to encode the information of any given universal group $U^{(l)}(F)<\Aut(T)$ from Section \ref{sec:universal} by attaching fins to the tree $T$, thus creating a tree with fins $\bsT$ with automorphism group isomorphic to $U^{(l)}(F)$.

\begin{prop}\label{prop:unitofins}
	Let $U^{(l)}(F)$ be the universal group of a tree $T$ with respect to a labelling $l$ and a group $F<S_n$, as in Definition \ref{defn:unigroup}.
	Then there is a subdivision $\sT$ of $T$, and a locally finite tree with fins $\bsT$ such that the natural map $\Aut(\bsT)\to\Aut(T)$ induces an isomorphism $\Aut(\bsT)\to U^{(l)}(F)$ of topological groups.
\end{prop}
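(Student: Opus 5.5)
We encode the labelling $l$ and the group $F$ by attaching fins to a subdivision of $T$, chosen so that the automorphisms of the resulting tree with fins are exactly the elements of $U^{(l)}(F)$.

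\textbf{Construction.} Let $\sT$ be the barycentric subdivision of $T$, so each geometric edge $\{e,\bar e\}$ gets a midpoint $m_e$. The half-edge of $\sT$ from $x=o(e)$ to $m_e$ is identified with $e$ itself.

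First we record the labels. For each $k\in\{1,\dots,n\}$, attach at every half-edge with label $k$ a family of fins whose isomorphism type is specific to $k$. For example, use $N_k$ parallel copies of a star fin, with the $N_k$ distinct and large. Each fin is a finite star whose centre is embedded at $m_e$ and which consists only of that half-edge, possibly extended by pendant edges built into the fin. This makes every automorphism of $\bsT$ carry label-$k$ half-edges to label-$k$ half-edges. That is too rigid: it would only give the label-preserving automorphisms, not $U^{(l)}(F)$.

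So instead we encode the \emph{$F$-orbit of the labelling at each vertex}. Let $\mathcal{L}_x$ be the set of bijections $F\circ l_x$, which has $|F|$ elements. For each vertex $x\in VT$ and each $\phi\in\mathcal{L}_x$, attach a fin $Y_{x,\phi}$ isomorphic to the star of $x$ in $\sT$ (the 1-ball: $x$ together with the half-edges to the midpoints $m_e$, $e\in E(x)$). Then, inside the fin, attach to the $k$-th leg a decorated pendant subtree $D_k$, with the $D_k$ pairwise non-isomorphic; for instance, a path of length $k$ hanging off the leg. Since $\bsT$ as defined only attaches fins along trees embedded in $X$, we realise $D_k$ by further subdividing $\sT$ and adding to $X$ itself a pendant path of length $k+1$ at the midpoint of each half-edge labelled $k$ by $\phi$. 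The fin $Y_{x,\phi}$ is then the star together with these paths. Having $|F|$ fins per vertex, indexed by $\phi$, keeps everything locally finite. Note that the extra pendant paths in $X$ must be distinguishable from the original tree: give them degree-1 ends, which cannot occur in $T$ since $n\ge3$.

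\textbf{Key steps.}
\begin{enumerate}
\item Check that $\Aut(\bsT)$ preserves the original tree $\sT\subset X$ (the vertices of degree $\ge3$ in $X$ and their convex hull), so restriction gives a continuous homomorphism $\Aut(\bsT)\to\Aut(T)$.
\item Show this homomorphism is injective. An automorphism trivial on $T$ fixes every pendant path and every fin, because the fins are determined by their embeddings in $X$ and the fins at a given vertex have distinct images.
\item Show the image is $U^{(l)}(F)$. An automorphism $g$ of $\bsT$ sends the set of fins at $x$ to the set at $gx$, and carries the labelling $\phi$ to a labelling in $\mathcal{L}_{gx}$. This gives $l_{gx}\circ g\circ l_x^{-1}\in F$. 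Conversely, for $g\in U^{(l)}(F)$ we have $g$ mapping $\mathcal{L}_x$ bijectively onto $\mathcal{L}_{gx}$ by $\phi\mapsto\phi\circ g^{-1}$. Extend $g$ to the pendant paths via the labels, which are then permuted within $\mathcal{L}$, and extend it to the fins by permuting them accordingly.
\item Conclude that the continuous bijective homomorphism between these locally compact groups is open. Either cite an open mapping theorem for $\sigma$-compact groups, or check it directly: the stabiliser of a ball of radius $R$ in $\bsT$ equals the stabiliser of a slightly larger ball in $T$, intersected with $U^{(l)}(F)$.
\end{enumerate}

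\textbf{Main obstacle.} The delicate point is the rigidity bookkeeping: ensuring that the decorations added to $X$ are forced to map to decorations, and that no unexpected automorphisms arise. This needs care, because the pendant paths create degree-1 vertices in $X$, and Remark~\ref{remk:preserveX} no longer applies directly. I would handle this by placing all decorations inside the fins themselves rather than in $X$, if the definition permits. Otherwise I would verify preservation of $X$ directly from Definition~\ref{defn:autofin}, which already requires that $X$ be preserved.
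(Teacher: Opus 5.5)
Your proposal is correct and follows the paper's overall strategy, but it encodes the labelling in the fins differently.

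\textbf{What is shared.} At each vertex $x$ you attach one fin for each labelling in the orbit $F\circ l_x$. The paper's fins $Y_{x,f}$, $f\in F$, are indexed by exactly this set, via $f\mapsto f^{-1}\circ l_x$. In both arguments:
\begin{itemize}
\item membership in $U^{(l)}(F)$ follows because fins at $x$ must map to fins at $gx$;
\item surjectivity comes from the bijection $Y_{x,\phi}\mapsto Y_{gx,\phi\circ g^{-1}}$;
\item injectivity comes from the fins at a vertex having pairwise distinct images.
\end{itemize}

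\textbf{Where you differ.} You keep each fin a star and record the label by a pendant path whose length depends on the label. This forces you to add those paths to $X$. Three consequences follow:
\begin{itemize}
\item The underlying tree is $\sT$ with finite hairs, not a subdivision of $T$.
\item Remark~\ref{remk:preserveX} no longer applies, so you depend on the convention in Definition~\ref{defn:autofin} that automorphisms preserve $X$.
\item You have to recover $T$ as the core of $X$.
\end{itemize}
The paper instead subdivides each edge of $T$ into $n$ edges. It takes $Y_{x,f}$ to be a ``spider'' with legs of lengths $1,\dots,n$, where the leg of length $i$ runs along the initial segment of $e^*$ with $l(e)=f(i)$. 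This places the decoration inside the fin itself, which is exactly the fix you hoped for in your last paragraph. The definition does permit it, once the decoration is realised by leg lengths along a fine enough subdivision. As a result $X=\sT$ stays leafless, $\Aut(\bsT)$ is the full cubical automorphism group, and $\bsT$ is literally a tree with fins on a subdivision of $T$, as the statement's notation suggests.

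\textbf{What your route gives.} Your subdivision does not depend on $n$. You also get the topological statement cheaply from the open mapping theorem for $\sigma$-compact groups, whereas the paper compares ball stabilisers by hand.

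\textbf{Two small repairs.}
\begin{itemize}
\item In step 1, the attachment points of the pendant paths also have degree $\ge 3$ in $X$. You should identify $\sT$ as the union of bi-infinite geodesics in $X$, and the vertices of $T$ as the vertices of degree $\ge 3$ in that core.
\item If you attach a separate pendant path for each $\phi$, an automorphism that is trivial on $T$ could a priori permute equal-length copies at the same point. Your injectivity step needs the observation that $Y_{x,\phi}$ must map to itself, because its pendant lengths determine $\phi$. Alternatively, attach one path per distinct label value, which avoids the issue.
\end{itemize}
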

\begin{proof}
Let $\sT$ be obtained from $T$ by subdividing each edge into $n$ edges. For a vertex $x\in VT$ let $x^*$ denote the corresponding vertex in $\sT$, and for an edge $e\in ET$ let $e^*$ denote the corresponding subtree of $\sT$ (i.e., the image of $e^*$ under the topological map $T\to\sT$).
For each $x\in VT$ and $f\in F$ we construct an embedding of trees $\gamma_{x,f}:Y_{x,f}\to \sT$ as follows (see also Figure \ref{fig:Yxf}):
\begin{itemize}
	\item The graph $Y_{x,f}$ is a finite tree obtained by taking $n$ paths of lengths $1,2,\dots,n$ respectively and identifying their start vertices. We write $y_{x,f}$ for this identified vertex, and we write $P_{i,x,f}\subset Y_{x,f}$ for the path of length $i$ that starts at $y_{x,f}$.
	\item The map $\gamma_{x,f}:Y_{x,f}\to \sT$ is the unique combinatorial embedding that maps $y_{x,f}$ to $x^*$, and maps each $P_{i,x,f}$ to an initial segment of $e^*$, where $e\in E(x)$ is the edge with $l(e)=f(i)$. 
\end{itemize}
The embeddings $\gamma_{x,f}$ then form the tree with fins $\bsT$ as in Definition \ref{defn:graphfin}.
Clearly $\bsT$ is locally finite.

Now let $\mathbf{g}\in\Aut(\bsT)$. 
This induces an automorphism of $\sT$ by restriction, and an automorphism $g\in\Aut(T)$ via the map $T\to\sT$.
We claim that $g\in U^{(l)}(F)$.
Let $x\in VT$.
The fin $Y_{x,1}$ (here $1$ denotes the identity of $F$) must map under $\mathbf{g}$ to a fin $Y_{gx,f}$ for some $f\in F$.
The map $\mathbf{g}:Y_{x,1}\to Y_{gx,f}$ must map $y_{x,1}$ to $y_{gx,f}$ and it must map each path $P_{i,x,1}$ to a path of the same length, namely the path $P_{i,gx,f}$.
Let $e_1\in E(x)$ with $l(e)=i$.
The path $P_{i,x,1}$ maps to $e_1^*$ under $\gamma_{x,1}$ and to $P_{i,gx,f}$ under $\mathbf{g}$.
And $P_{i,gx,f}$ maps to $e_2^*$ under $\gamma_{gx,f}$, with $e_2\in E(gx)$ the edge with $l(e_2)=f(i)$. Therefore $ge_1=e_2$.
This holds for all $i\in\{1,\dots,n\}$, therefore $l_{gx}\circ g\circ l_x^{-1}=f$ (notation from Definition \ref{defn:taulegal}).
This holds for all $x\in VT$, thus $g\in U^{(l)}(F)$.
This shows that the natural map $\Aut(\bsT)\to\Aut(T)$ has image contained in $U^{(l)}(F)$.

Next we show that $\Aut(\bsT)\to U^{(l)}(F)$ is surjective.
Let $g\in U^{(l)}(F)$.
Let $x\in VT$ and suppose that $l_{gx}\circ g\circ l_x^{-1}=f_x\in F$ ($f_x$ exists since $g\in U^{(l)}(F)$).
Let $f\in F$ and $i\in\{1,\dots,n\}$. The path $P_{i,x,f}\subset Y_{x,f}$ maps to $e_1^*$ under $\gamma_{x,f}$, where $e_1\in E(X)$ is the edge with $l(e_1)=f(i)$.
And the path $P_{i,gx,f_xf}\subset Y_{gx,f_xf}$ maps to $e_2^*$ under $\gamma_{gx,f_xf}$, where $e_2\in E(gx)$ is the edge with $l(e_2)=f_xf(i)$. But since $l_{gx}\circ g\circ l_x^{-1}=f_x$, we see that $ge_1=e_2$. This holds for all $i$, so the unique graph isomorphism $Y_{x,f}\to Y_{gx,f_xf}$ (which maps each path $P_{i,x,f}$ to the path $P_{i,gx,f_xf}$) fits into a commutative diagram:
\begin{equation}\label{finmap}
	\begin{tikzcd}[
		ar symbol/.style = {draw=none,"#1" description,sloped},
		isomorphic/.style = {ar symbol={\cong}},
		equals/.style = {ar symbol={=}},
		subset/.style = {ar symbol={\subset}}
		]
		Y_{x,f}\ar{d}{\gamma_{x,f}}\ar{r}&Y_{gx,f_xf}\ar{d}{\gamma_{gx,f_xf}}\\
		\sT\ar{d}&\sT\ar{d}\\
		T\ar{r}{g}&T
	\end{tikzcd}
\end{equation}
This holds for all $x,f$, therefore the automorphism $g\in\Aut(T)$ extends to an automorphism $\mathbf{g}\in\Aut(\bsT)$. (Note that for each $x\in VT$, $\mathbf{g}$ induces a bijection between the fins $\{Y_{x,f}\}_{f\in F}$ and $\{Y_{gx,f}\}_{f\in F}$ given by $Y_{x,f}\mapsto Y_{gx,f_xf}$, which is a genuine bijection because $F$ is a group.)
This shows  that $\Aut(\bsT)\to U^{(l)}(F)$ is surjective, as claimed.

Now we show that the map $\Aut(\bsT)\to U^{(l)}(F)$ has trivial kernel. 
In fact, we show the stronger statement that if $\mathbf{g}\in\Aut(\bsT)$ fixes a vertex $x\in VT$ as well as all the subtrees $e^*$ with $e\in E(x)$, then $\mathbf{g}$ fixes all the fins $Y_{x,f}$ with $f\in F$.
Indeed, for $f\in F$, the fin $Y_{x,f}$ must map to some fin $Y_{x,f'}$. For each $i$, the path $P_{i,x,f}$ must map to the path $P_{i,x,f'}$ under $\mathbf{g}$, and these paths must both map to the same $e_i^*$ under $\gamma_{x,f}$ and $\gamma_{x,f'}$, with $e_i\in E(x)$ (since $\mathbf{g}$ fixes all the $e^*$ with $e\in E(x)$ by hypothesis). But we also have $f(i)=l(e_i)=f'(i)$ by the definition of $\gamma_{x,f}$ and $\gamma_{x,f'}$, hence $f=f'$ and $\mathbf{g}$ fixes the fin $Y_{x,f}$.

Finally, we claim that the isomorphism $\Aut(\bsT)\to U^{(l)}(F)$ is an isomorphism of topological groups.
Let $x\in VT$.
By construction, for $k\geq1$, any $\mathbf{g}\in\Aut(\bsT)$ which fixes the $nk$-ball about $x$ in $\bsT$ induces an automorphism $g\in U^{(l)}(F)$ which fixes the $k$-ball about $x$ in $T$.
This shows that any sequence converging to the identity in $\Aut(\bsT)$ will map to a sequence that converges to the identity in $U^{(l)}(F)$.
Conversely, for $k\geq1$, if an automorphism $g\in U^{(l)}(F)$ fixes the $k$-ball about $x$ in $T$, then the claim from the previous paragraph implies that the extension $\mathbf{g}\in\Aut(\bsT)$ fixes the $(k-1)n$-ball about $x^*$ in $\bsT$.
This shows that any sequence converging to the identity in $U^{(l)}(F)$ will map to a sequence that converges to the identity in $\Aut(\bsT)$, thus proving the claim.
\end{proof}

\begin{figure}[H]
	\centering
	\begin{tikzpicture}[auto,node distance=2cm,
		thick,every node/.style={circle,draw,font=\small},
		every loop/.style={min distance=2cm},
		hull/.style={draw=none},
		]
		\tikzstyle{label}=[draw=none,font=\normalsize]
		\tikzstyle{sm}=[draw=blue,scale=.5,fill=blue]
		\tikzstyle{smb}=[scale=.5,fill]
		\node[sm] at (0,0) {};
		
		\draw[dashed] ([shift=(80:2)]0,0) arc (80:120:2);
		\draw[dashed] ([shift=(40:2)]0,0) arc (40:0:2);
	
		\path[blue] (0,0) edge  ++(180:1);
		\path[blue] (0,0) edge  ++(160:2);
		\path[blue] (0,0) edge  ++(140:3);
		\path[blue] (0,0) edge  ++(60:4);
		
		
		\node[sm] at ++(180:1){};
		\node[sm] at ++(160:1){};
		\node[sm] at ++(160:2){};
		\node[sm] at ++(140:1){};
		\node[sm] at ++(140:2){};
		\node[sm] at ++(140:3){};
		\node[sm] at ++(60:1){};
		\node[sm] at ++(60:2){};
		\node[sm] at ++(60:3){};
		\node[sm] at ++(60:4){};
		
		\node[label,blue,font=\huge] at (0,5){$Y_{x,f}$};
		\node[label,font=\large] at (4,.6){$\gamma_{x,f}$};
		\node[label,blue,font=\large] at (0,-.4){$y_{x,f}$};
		\node[label,blue,font=\large] at (2.5,3.7){$P_{i,x,f}$};
		\node[label,blue,font=\large] at (-1.7,0){$P_{1,x,f}$};
		\node[label,blue,font=\large] at (-2.5,1){$P_{2,x,f}$};
		\node[label,blue,font=\large] at (-2.6,2.3){$P_{3,x,f}$};
		
		\draw[draw=black,-triangle 90, ultra thick] (3,1) -- (5,1);
		
		\begin{scope}[shift={(10.5,0)}]
			\node[sm] at (0,0) {};
			
			\draw[dashed] ([shift=(80:2)]0,0) arc (80:120:2);
			\draw[dashed] ([shift=(40:2)]0,0) arc (40:0:2);
			
			\path (0,0) edge  ++(180:5);
			\path (0,0) edge  ++(160:5);
			\path (0,0) edge  ++(140:5);
			\path (0,0) edge  ++(60:5);
			
			\path[blue] (0,0) edge  ++(180:1);
			\path[blue] (0,0) edge  ++(160:2);
			\path[blue] (0,0) edge  ++(140:3);
			\path[blue] (0,0) edge  ++(60:4);
			
			\node[sm] at ++(180:1){};
			\node[smb] at ++(180:2){};
			\node[smb] at ++(180:3){};
			\node[smb] at ++(180:4){};
			\node[smb] at ++(180:5){};
			\node[sm] at ++(160:1){};
			\node[sm] at ++(160:2){};
			\node[smb] at ++(160:3){};
			\node[smb] at ++(160:4){};
			\node[smb] at ++(160:5){};
			\node[sm] at ++(140:1){};
			\node[sm] at ++(140:2){};
			\node[sm] at ++(140:3){};
			\node[smb] at ++(140:4){};
			\node[smb] at ++(140:5){};
			\node[sm] at ++(60:1){};
			\node[sm] at ++(60:2){};
			\node[sm] at ++(60:3){};
			\node[sm] at ++(60:4){};
			\node[smb] at ++(60:5){};
			
			\node[label,font=\huge] at (0,5){$\sT$};
			\node[label,font=\large] at (0,-.4){$x^*$};
			\node[label,font=\large] at (2.6,4.7){$e^*$};
		\end{scope}
		
	\end{tikzpicture}
	\caption{\small The map $\gamma_{x,f}:Y_{x,f}\to \sT$. The subtree $e^*\subset\sT$ indicated is associated with $e\in E(x)$ such that $l(e)=f(i)$.}\label{fig:Yxf}
\end{figure}
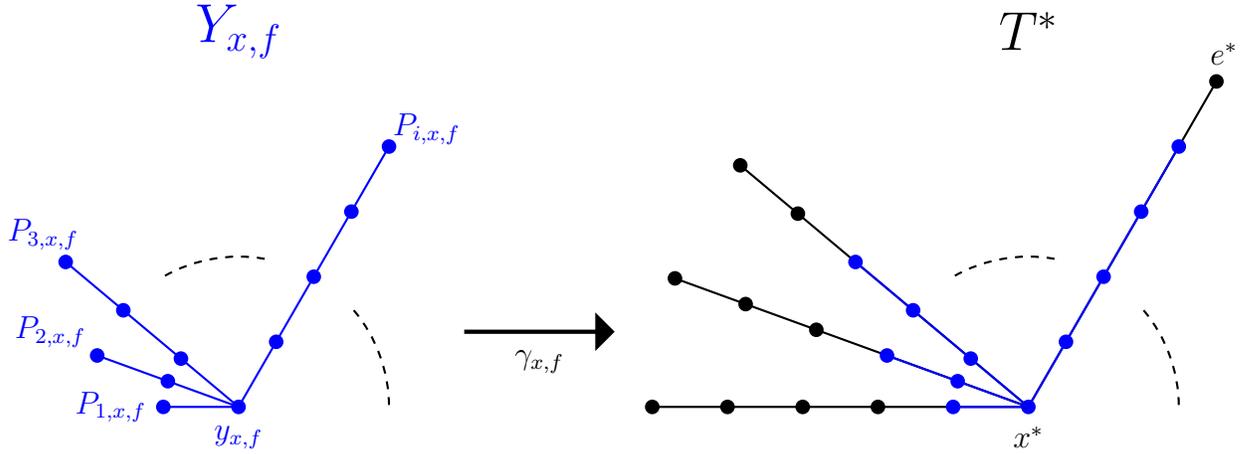

We can now deduce Theorems \ref{thm:fintree} and \ref{thm:finquotients} using the results from Section \ref{sec:universal}.

\theoremstyle{plain}
\newtheorem*{thm:fintree}{Theorem \ref{thm:fintree}}
\begin{thm:fintree}
	There exists a locally finite tree with fins $\bfX$ and free uniform lattices $\G,\G'<\Aut(\bfX)$ such that no uniform lattice $\La<\Aut(\bfX)$ contains conjugates of both $\G$ and $\G'$.
\end{thm:fintree}
\begin{proof}
	Theorem \ref{thm:univ} provides us with a $\tau$-legal labelling $l$ on a tree $T$, a universal group $U^{(l)}(F)$, and free uniform lattices $\G,\G'<U^{(l)}(F)$ such that no uniform lattice $\La<U^{(l)}(F)$ contains $U^{(l)}(F)$-conjugates of both $\G$ and $\G'$.
	By Proposition \ref{prop:unitofins}, we obtain a locally finite tree with fins $\bsT$ and an isomorphism of topological groups $\Aut(\bsT)\to U^{(l)}(F)$.
	The theorem follows by mapping the free uniform lattices $\G,\G'<U^{(l)}(F)$ over to $\Aut(\bsT)$ and taking $\bfX=\bsT$.
	Note that the images of $\G,\G'$ in $\Aut(\bsT)$ will act freely on $\bsT$, because if $g\in\G$ (or $g\in\G'$) corresponds to $\mathbf{g}\in\Aut(\bsT)$ that fixes a point $x\in\bsT$, then $\mathbf{g}$ will also fix the point $r(x)\in\sT$, where $r:\bsT\to\sT$ is the natural retraction map, and $g$ will fix the corresponding point in $T$.
\end{proof}

\theoremstyle{plain}
\newtheorem*{thm:finquotients}{Theorem \ref{thm:finquotients}}
\begin{thm:finquotients}
There exists a locally finite uniform tree with fins $\bfX$ with the following properties:
\begin{enumerate}
	\item\label{it:diffquotients} There is no uniform lattice $\G<\Aut(\bfX)$ such that the map $\G\backslash\bfX\to\Aut(\bfX)\backslash\bfX$ is an isomorphism.
	\item\label{it:diffperms} Let $X\subset\bfX$ be the underlying tree. 
	For $x\in VX$, let $E_X(x)$ denote the collection of edges of $X$ that are incident to $x$.
	There is no uniform lattice $\G<\Aut(\bfX)$ such that, for every $x\in VX$, every permutation of $E_X(x)$ induced by an automorphism in $\Aut(\bfX)_x$ is also induced by an automorphism in $\G_x$.
\end{enumerate}
\end{thm:finquotients}
\begin{proof}
	Take the $\tau$-legal labelling on a tree $T$ and the universal group $U^{(l)}(F)$ from Example \ref{exmp:fewerorbits}.
	Apply Proposition \ref{prop:unitofins} to get a locally finite tree with fins $\bsT$ and an isomorphism of topological groups $\Aut(\bsT)\to U^{(l)}(F)$ induced by the natural map $\Aut(\bsT)\to\Aut(T)$.
	Take $\bfX=\bsT$.
	As explained in Example \ref{exmp:fewerorbits}, every uniform lattice in $U^{(l)}(F)$ has strictly fewer orbits of geometric edges in $T$ than $U^{(l)}(F)$ itself. This implies that $\bfX$ satisfies part \ref{it:diffquotients} of the theorem.
	Furthermore, it is explained in Example \ref{exmp:fewerorbits} that there is no uniform lattice $\G< U^{(l)}(F)$ with the property that the map $\sigma_{x,l}:\G_x\to F$ is a surjection for every $x\in VT$.
	The map $\sigma_{x,l}:\G_x\to F$ is a restriction of the map $\sigma_{x,l}:U^{(l)}(F)_x\to F$, which describes how $U^{(l)}(F)_x$ permutes the edges in $E(x)$. Moreover, $\sigma_{x,l}: U^{(l)}(F)_x\to F$ is surjective by Proposition \ref{prop:local=F}\ref{it:local=F}.
	Therefore, there is no uniform lattice $\G< U^{(l)}(F)$ such that, for every $x\in VT$, every permutation of $E(x)$ induced by an automorphism in $U^{(l)}(F)_x$ is also induced by an automorphism in $\G_x$.
	This implies that $\bfX$ satisfies part \ref{it:diffperms} of the theorem.	
\end{proof}

\bibliographystyle{alpha}
\bibliography{Ref}

\end{document}